\theoremstyle{plain}
\newtheorem{thm}{Theorem}[section]
\newtheorem{cor}[thm]{Corollary}
\newtheorem{lem}[thm]{Lemma}
\newtheorem{prop}[thm]{Proposition}
\theoremstyle{definition}
\newtheorem{exam}[thm]{Example}
\newtheorem{defn}[thm]{Definition}
\theoremstyle{remark}
\newtheorem{rem}[thm]{Remark}
\numberwithin{equation}{section}
\begin{document}

\title{Measure Algebras on Homogeneous Spaces}
\author{Hossein Javanshiri and Narguess Tavallaei
}
\maketitle

\begin{abstract}
 \footnotetext[2]{\it Keywords. \rm Homogeneous space, strongly quasi--invariant
measure, Banach algebra, measure algebra, involution, bounded approximate identity.
}

For a locally compact group $G$ and a compact subgroup $H$, we show that the Banach space $M(G/H)$ may be considered as a quotient space of $M(G)$. Also, we define a convolution on $M(G/H)$ which makes it into a Banach algebra. It may be identified with a closed subalgebra of the involutive Banach algebra $M(G)$, and there is no involution on $M(G/H)$ compatible with this identification unless $H$ is a normal subgroup of $G$. In other words, $M(G/H)$ is a $*$-Banach subalgebra of $M(G)$ only if $H$ is a normal subgroup of $G$. As well, it is a unital Banach algebra just when $H$ is a normal subgroup. Furthermore, when $G/H$ is attached to a strongly quasi-invariant measure, $L^1(G/H)$ is a Banach subspace of $M(G/H)$. Using the restriction of the convolution on $M(G/H)$, we obtain a Banach algebra $L^1(G/H)$, which may be considered as a Banach subalgebra of $L^1(G)$, with a right approximate identity. It has no involution and no left approximate identity except for a normal subgroup $H$. Consequently, the Banach algebra $L^1(G/H)$ is amenable if and only if $H$ is a normal subgroup and $G$ is amenable.

\end{abstract}

\section{Preliminaries}

For a locally compact Hausdorff space $X$, we denote the
space of all continuous complex-valued functions on
$X$ that vanishes at infinity by $C_0 (X)$, the subspace of $C_0 (X)$ consists of all functions with compact supports by $C_c (X)$, and the Banach space of all complex regular Borel measure on $X$ by $M(X)$. When $\mu$ is a Radon measure on $X$,
for all $1\leq p <+\infty$, $(L^p(X,\mu),\|.\|_p)$ stands for the
Banach space of the equivalence classes of all $\mu\,$-measurable
complex-valued functions on $X$ whose $p\,$th powers are
integrable with
\[\|f\|_p=(\int_X |f(x)|^p\,d\mu (x))^{1/p}\quad(f\in L^p(X,\mu)).\]
Also, by $(L^\infty(X,\mu),\|.\|_\infty)$ we mean the Banach space
of all equivalence classes of locally essentially bounded $\mu\,$-measurable
complex-valued functions on $X$ with the usual norm $\|.\|_\infty$. When $\nu$ is a Radon measure on $X$ and $f\in L^1(X,\nu)$, we mean by $\nu_f$ the Radon measure on $X$ satisfying $d\nu_f(x)=f(x)\,d\nu(x)$.\\

Throughout, we suppose that $G$ is a locally
compact group with neutral element $e$, left Haar measure $\lambda_G$, and
modular function $\Delta_G$. Also, $H$ is a closed subgroup of $G$
with left Haar measure $\lambda_H$ and modular function $\Delta_H$ and $q:G\longrightarrow G/H$ is the canonical
projection. The
quotient space $G/H$ is considered as a homogeneous
space that $G$ acts on it from the left, by
$x(yH)=(xy)H$. \\
When $\mu$ is a Radon measure on $G/H$, for all $x\in G$, the translation $\mu_x$ of $\mu$ is defined by $\mu_x (E)=\mu (xE)$,
where $E$ is a Borel subset of $G/H$. A measure $\mu$ is said to
be {\it $G$-invariant} if $\mu_x =\mu$ for all
$x\in G$, and is said to be {\it strongly quasi--invariant}
provided that a continuous function $\lambda :G\times G/H
\longrightarrow (0,+\infty)$ exists which satisfies
\begin{eqnarray*}\label{rho6}
d\mu_x (yH)=\lambda (x,yH)\,d\mu (yH)\quad(x,y\in G).
\end{eqnarray*}
Also, a rho-function for the pair $(G,H)$ is a continuous function
$\rho:G\longrightarrow (0,+\infty)$ with
\[\rho(x\xi)=\frac{\Delta _{H}(\xi)}{\Delta _{G}(\xi)}\,\rho(x)\quad(x\in G,\;\xi \in H).\]
We should mention that $(G,H)$ always admits a rho-function and for a given rho-function $\rho$, there exists a strongly quasi--invariant measure $\mu$ on $G/H$ such that the Weil's formula holds:
\begin{eqnarray}\label{rho2}
\int_G f(x)\,\rho(x)\,d\lambda_G(x) = \int_{G/H}\int_H f(x\xi)\,d\lambda_H(\xi)\,d\mu(xH)\quad(f\in C_c(G)).
\end{eqnarray}
In addition, all strongly quasi--invariant measures on $G/H$ arise
from rho-functions as \eqref{rho2}, and all
these measures are strongly equivalent. Specially, if $\mu$ is a strongly quasi--invariant measure, then so are its translations and
\begin{eqnarray*}\label{rho6}
\frac{d\mu_x}{d\mu}(yH) &=& \frac{\rho(xy)}{\rho(y)}\quad(x,y\in
G).
\end{eqnarray*}
(For details see \cite[Subsection~2.6]{farmonic}).\\


Moreover, if $\mu$ is the strongly quasi--invariant
measure on $G/H$ which arises from a rho-function $\rho$ and $L^1(G/H)=L^1(G/H,\mu)$, then the mapping
$T_\rho:L^1(G)\mapsto L^1(G/H)$ defined by
\begin{eqnarray}\label{tif}
T_\rho f(xH)=\int_H
\frac{f(x\xi)}{\rho(x\xi)}\,d\lambda_H(\xi)\quad(\mu -\text {almost all }~xH\in
\frac{G}{H})
\end{eqnarray}
is a surjective bounded linear map with $\|T_\rho \|\leq 1$ and
\begin{eqnarray*}
\int_{G/H}T_\rho f(xH)\,d\mu(xH)=\int_G f(x)\,d\lambda_G(x)\quad(f\in L^1(G)).
\end{eqnarray*}
More
precisely, for all $\varphi\in L^1(G/H)$ there exists
some $f\in L^1(G)$ such that $\varphi=T_\rho f$ and such that $f\geq 0$
almost everywhere if $\varphi\geq 0$ $\mu\,$-almost everywhere. Also,
\begin{eqnarray}\label{norminf} \|\varphi\|_1=\inf\{\|f\|_1:\,f\in
L^1(G),\,\varphi=T_\rho f\}
\end{eqnarray}
for all $\varphi\in L^1(G/H)$ and
\begin{eqnarray}
\|\varphi\|_1=\inf\{\|f\|_1:\,f\in C_c(G),\,\varphi=T_\rho f\},
\end{eqnarray}
where $\varphi\in C_c(G/H)$. It follows that the Banach space $L^1(G/H)$ is isometrically isomorphic to the quotient space $L^1(G)/Ker(T_\rho)$ equipped with the usual quotient norm (For more details see \cite[Subsection~3.4]{reiter}). Obviously, the function $T_\rho$ is dependent on the choice $\rho$ and when there is no ambition we use notation $T$ instead of $T_\rho$.\\

When $H$ is a closed subgroup
of $G$ and $\mu$ is a strongly quasi-invariant Radon measure on $G/H$, we may define the left translation of some $\varphi\in L^1(G/H)$, by an element $x$ of $G$, by $\mathcal{L}_x\varphi=T(L_x f)$, where $f\in L^1(G)$ and $\varphi = Tf$. In other words, we have
\begin{eqnarray}\label{ltrn}
\mathcal{L}_x\varphi(yH)=\lambda(x^{-1},yH)\,\varphi(x^{-1}yH)\quad(\mu -\text {almost all }~yH\in
G/H)
\end{eqnarray}
Also, for all $\varphi\in L^\infty(G/H)$ we may define
\begin{eqnarray}
\mathcal{L}_x\varphi(yH)=\varphi(x^{-1}yH)\quad(\mu -\text {locally almost all }~yH\in
G/H)
\end{eqnarray}

A function $\varphi$ on $G/H$ is said to be left uniformly continuous if for all
$\varepsilon >0$ there exists a neighborhood $U$ of $e$ such that for all $x\in U$ and $y\in G$, we have $|\varphi(xyH)-\varphi(yH)|<\varepsilon$.
We denote the set of all bounded left uniformly
continuous functions of $G/H$ by $LUC(G/H,G)$. Therefore, we get $C_0(G/H)\subseteq LUC(G/H,G)$.
Moreover, the homomorphism $G\rightarrow B(L^1(G))$, $x\mapsto \mathcal{L}_x$, is continuous with respect to the strong operator topology on $B(L^1(G/H))$. Also, for all $\varphi\in L^\infty(G/H)$, the mapping $x\mapsto \mathcal{L}_x\varphi$
is continuous only if $\varphi$ is left uniformly continuous.

Now, suppose that $H$ is a compact subgroup of $G$. It is easy to check that for all\break $\varphi\in C_0(G/H)$, $\varphi oq\in C_0(G)$ and it is constant on the left cosets of $H$. On the other hand, if\break $f\in C_0(G)$, then $T_\infty f\in C_0(G/H)$ where $T_\infty f(xH)=\int_H f(x\xi)\,d\lambda_H(\xi)$, $x\in G$. Moreover,\break
$T_\infty:C_0(G)\rightarrow C_0(G/H)$, $f\mapsto T_\infty f$ is a norm decreasing linear map between Banach spaces $C_0(G)$ and $C_0(G/H)$. Assume that
\begin{eqnarray*}
C_0(G:H)=\{f\in C_0(G):\,f(xh)=f(x), x\in G,\,h\in H\}.
\end{eqnarray*}
One may easily check that $C_0(G:H)$ is a Banach subalgebra of $(C_0(G), \|.\|_{\sup})$ and the restriction of $T_\infty$ to $C_0(G:H)$ determines an isometry isomorphism between Banach spaces $C_0(G:H)$ and $C_0(G/H)$. Also, for all $\varphi\in L^1(G/H)$ and $x\in G$ we mean by the right translation of $\varphi$ by $x$ in $L^1(G/H)$, the element $\mathcal{R}_x\varphi$ of $L^1(G/H)$ which is determined by $\mathcal{R}_x \varphi =T(R_x \varphi_\rho)$. Likewise, for all $\varphi\in L^\infty(G/H)$ and $x\in G$ we define $\mathcal{R}_x\varphi\in L^\infty(G/H)$ by
\begin{eqnarray}
\mathcal{R}_x \varphi(yH) =\int_H \varphi(y\xi xH)\,d\lambda_H(\xi)\quad(\mu -\text {locally almost all }~yH\in
G/H).
\end{eqnarray}
Then $\mathcal{R}_x:L^1(G/H)\rightarrow L^1(G/H)$, $\varphi\mapsto \mathcal{R}_x\varphi$ is a bounded linear operator whose adjoint is $\Delta_G(x)^{-1}\,\mathcal{R}_{x^{-1}}:L^\infty(G/H)\rightarrow L^\infty(G/H)$. Moreover, the mapping $G\rightarrow B(L^1(G))$, $x\mapsto \mathcal{R}_x$ is a continuous homomorphism with respect to the strong operator topology on $B(L^1(G/H))$. But the mapping $\mathcal{R}:G\rightarrow L^\infty(G)$, $x\mapsto \mathcal{R}_x\varphi$ is not necessarily continuous where $\varphi\in L^\infty(G/H)$. We mean by a bounded right uniformly continuous function on $G/H$, a bounded continuous function for which the mapping $G\rightarrow L^\infty(G/H)$, $x\mapsto \mathcal{R}_x\varphi$ is continuous. We denote the set of all bounded right uniformly continuous function $G/H$ by $RUC(G/H,G)$. Then $RUC(G/H,G)$ is a Banach subspace of $C_b(G/H)$, with resect to $\|.\|_{\sup}$, containing $C_0(G/H)$.\\


\section{$M(G/H)$ as a Banach Left $M(G)$-Module}

Reiter and Stegeman have remarked in \cite{reiter} that for all closed subgroups $H$ of $G$, one may define a norm decreasing linear map as follow:
\begin{eqnarray*}
\left\{\begin{array}{clcr}
\tilde{T}:M(G)\rightarrow M(G/H)\\
\tilde{T}m(E)=m(q^{-1}(E))
\end{array}
\right.
\end{eqnarray*}
for all Borel subsets $E$ of $G/H$ and $m\in M(G)$. In other words,
\begin{eqnarray}
\int_{G/H}\varphi(xH)\,d\tilde{T}m(xH)=\int_G \varphi(xH)\,dm(x)\quad(\varphi\in C_0(G/H))
\end{eqnarray}
Becides, when $\mu$ is a strongly quasi-invariant measure on $G/H$, $\tilde{T}$ is an extension of\break $T:L^1(G)\rightarrow L^1(G/H,\mu)$ defined by \eqref{tif} (For details see \cite[Subsection~3.4]{reiter}). Therefore, by using the kernel of $\tilde{T}$, one may consider $M(G/H)$ as a quotient of the Banach space $M(G)$. \\

\begin{exam}\label{exam01}
Let $G$ be a locally compact group.
\newcounter{mh3}
\begin{list}
{\bf(\alph{mh3})}{\usecounter{mh3}}
\item We know that $\lambda_G\in M(G)$ just when $G$ is compact. In this case, every closed subgroup $H$ of $G$ is compact and has a normalized Haar measure $\lambda_H$. If $\mu$ is the invariant measure on $G/H$ arising from the constant rho-function $\rho=1$, then $\tilde{T}\lambda_G =\mu$.
\item For all $x\in G$, $\tilde{T}(\delta_x)=\delta_{xH}$.
\item Let $\mu$ be a strongly quasi--invariant measure on $G/H$. For all $f\in L^1(G)$, we have $\tilde{T}{\lambda}_f=\mu_{Tf}$.
\end{list}
\end{exam}

\begin{rem}
Part {\bf (c)} of Example \ref{exam01} states that when $m$ is absolutely continuous with respect to the left Haar measure $\lambda$ on $G$ and $dm(x)=f(x)\,d\lambda_G(x)$, $f\in L^1(G)$, then $\tilde{T}m$ is absolutely continuous with respect to $\mu$ and $d\tilde{T}m(xH)=Tf(xH)\,d\mu(xH)$. Generally, we can say the transform $\tilde{T}:M(G)\rightarrow M(G/H)$ preserves absolute continuity, i.e.; if $m_1,m_2\in M(G)$ are such that $m_1\ll m_2$, then $\tilde{T}m_1\ll\tilde{T}m_2$. Since if $E$ is a Borel subset of $G/H$ and $\tilde{T}m_2(E)=0$, then $m_2(q^{-1}(E))=\tilde{T}m_2(E)=0$ and so $\tilde{T}m_1(E)=m_1(q^{-1}(E))=0$.

\end{rem}

Using the left action of $G$ on $G/H$, we may define a left action of $M(G)$ on $M(G/H)$ as follows:\\
For all $m\in M(G)$ and $\nu\in M(G/H)$ define $m \star\nu\in M(G/H)$ by
\begin{eqnarray}
m \star\nu(\varphi)=\int_{G/H} \int_G \varphi(yxH)\,dm(y)\,d\nu(xH)\quad(\varphi\in C_0(\frac{G}{H}))
\end{eqnarray}
This action makes $M(G/H)$ into a Banach left $M(G)$-module.\\

From another point of view, this action obtained by a transferring of the convolution of $M(G)$ via $\tilde{T}$. In details, for all $m\in M(G)$ and $\nu\in M(G/H)$, $m \star\nu=\tilde{T}(m*m_0)$, where $m_0\in M(G)$ is such that $\nu=\tilde{T}(m_0)$. To show this, we need the following proposition and the obvious fact that $\tilde{T}(m*m_0)$ is independent of the choice $m_0$ with $\tilde{T}(m_0)=\nu$.

\begin{prop}
For all $m_1,m_2\in M(G)$, $\tilde{T}(m_1*m_2)=m_1\star\tilde{T}m_2$.
\end{prop}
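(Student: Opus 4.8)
The plan is to verify the identity by testing both sides against an arbitrary $\varphi \in C_0(G/H)$, since a measure in $M(G/H)$ is determined by its action as a functional on $C_0(G/H)$. So I would fix $\varphi \in C_0(G/H)$ and compute $\tilde{T}(m_1 * m_2)(\varphi)$ and $(m_1 \star \tilde{T}m_2)(\varphi)$ separately, showing they agree.

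First I would unwind the left-hand side. By the defining relation for $\tilde{T}$, namely $\int_{G/H}\varphi(xH)\,d\tilde{T}m(xH) = \int_G \varphi(xH)\,dm(x)$ applied with $m = m_1 * m_2$, we get $\tilde{T}(m_1*m_2)(\varphi) = \int_G \varphi(xH)\,d(m_1*m_2)(x)$. Then I would invoke the definition of convolution on $M(G)$, which expands $\int_G g(x)\,d(m_1*m_2)(x) = \int_G \int_G g(yx)\,dm_1(y)\,dm_2(x)$, applied to the function $g(x) = \varphi(xH)$. This yields $\tilde{T}(m_1*m_2)(\varphi) = \int_G \int_G \varphi(yxH)\,dm_1(y)\,dm_2(x)$.

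Next I would unwind the right-hand side. Writing $\nu = \tilde{T}m_2$, the module action gives $(m_1 \star \nu)(\varphi) = \int_{G/H}\int_G \varphi(yxH)\,dm_1(y)\,d\nu(xH)$. The inner integral $x \mapsto \int_G \varphi(yxH)\,dm_1(y)$ is a bounded continuous function on $G/H$ — here one should note it is well-defined as a function of the coset $xH$, since $\varphi$ is constant on left cosets of $H$, i.e. $\varphi(yx\xi H) = \varphi(yxH)$ for $\xi \in H$ — call it $\psi(xH)$. Then $(m_1\star\nu)(\varphi) = \int_{G/H}\psi(xH)\,d\tilde{T}m_2(xH) = \int_G \psi(xH)\,dm_2(x) = \int_G\int_G \varphi(yxH)\,dm_1(y)\,dm_2(x)$, again by the defining relation for $\tilde{T}$, now applied to the function $x \mapsto \psi(xH)$. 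Comparing with the left-hand side finishes the argument.

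The only genuinely delicate point is the Fubini-type interchange and the measurability/continuity claims needed to legitimately apply the defining property of $\tilde{T}$ on the right-hand side: one must check that $\psi$ really lies in $C_0(G/H)$ (or at least is $\tilde{T}m_2$-integrable with the pairing valid), which rests on continuity of translation and the fact that $\varphi \in C_0(G/H)$ together with finiteness of $\|m_1\|$; and one must justify that the iterated integral $\int\int \varphi(yxH)\,dm_1(y)\,dm_2(x)$ is unambiguous, which follows from Fubini's theorem for the product of the finite measures $m_1$ and $m_2$ against the bounded Borel function $(y,x)\mapsto \varphi(yxH)$. These are routine once the structure is laid out, so I do not expect any real obstacle; the proof is essentially a bookkeeping exercise chaining together the definition of $\tilde{T}$, the definition of convolution on $M(G)$, and the definition of the module action $\star$.
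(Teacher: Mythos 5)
Your proposal is correct and follows essentially the same route as the paper: expand both sides against a test function $\varphi\in C_0(G/H)$ using the defining relation for $\tilde{T}$ and the definition of convolution on $M(G)$, and match the resulting iterated integrals. Your extra care about the well-definedness and continuity of the inner integral $\psi(xH)=\int_G\varphi(yxH)\,dm_1(y)$ is a reasonable refinement of the step the paper performs implicitly.
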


\begin{proof}
For all $m_1,m_2\in M(G)$ and $\varphi\in C_0(G/H)$,
\begin{eqnarray*}
\int_{G/H}\varphi(xH)\,d\tilde{T}(m_1*m_2)(xH) &=& \int_{G}\varphi(xH)\,d(m_1*m_2)(x) \\
&=& \int_{G}\int_G \varphi(xyH)\,dm_1(x)\,dm_2(y) \\
&=& \int_{G/H} \int_{G} \varphi(xyH)\,dm_1(x)\,d(\tilde{T}m_2)(yH) \\
&=& \int_{G/H}\varphi(xH)\,d(m_1\star\tilde{T}m_2)(xH),
\end{eqnarray*}
for $\mu$-almost all $yH\in G/H$.
\end{proof}

\section{$M(G/H)$ as a Closed Subspace of $M(G)$}

From now on, we suppose that $H$ is a compact subgroup of $G$. We aim to prove that, in this case, $M(G/H)$ is isometrically isomorphic to a specific closed subspace of $M(G)$. Besides, it is identified with the dual of a closed subalgebra of $C_0(G)$.\\

Recall that by taking some $x\in G$ and $f\in C_0(G)$, we obtain two functions $f_x,\,\leftidx{_x}f\in C_0(G)$, defined by $f_x(y)=f(yx)$ and $\leftidx{_{x}}f(y)=f(xy)$, and the mappings follow are separately continuous:
\begin{eqnarray*}
\left\{\begin{array}{clcr}
G\times C_0(G)\rightarrow C_0(G)\\
(x,f)\mapsto f_x
\end{array}
\right.
\quad\text{ and}\quad
\left\{\begin{array}{clcr}
G\times C_0(G)\rightarrow C_0(G)\\
(x,f)\mapsto \leftidx{_x} f
\end{array}
\right.
\end{eqnarray*}
where $C_0(G)$ has been attached to $\|.\|_{\sup}$. We first show that for all $x\in G$ and $\varphi\in C_0(G/h)$, there are two functions $\varphi_{xH},\,\leftidx{_{xH}}\varphi\in C_0(G)$ with the similar properties.

\begin{lem}\label{lma02}
For all $\varphi\in C_0(G/H)$ and $x\in G$, the functions defined as follow belong to $C_0(G/H)$:
\begin{eqnarray*}
\left\{\begin{array}{clcr}
\leftidx{_{xH}}\varphi:G/H\rightarrow \mathbb{C}\\
yH\mapsto\int_H \varphi(x\xi yH)\,d\lambda_H(\xi)
\end{array}
\right.
\quad\text{ and}\quad
\left\{\begin{array}{clcr}
\varphi_{xH}:G/H\rightarrow \mathbb{C}\\
yH\mapsto\mathcal{R}_x\varphi(yH)=\int_H \varphi(y\xi xH)\,d\lambda_H(\xi)
\end{array}
\right.
\end{eqnarray*}
\end{lem}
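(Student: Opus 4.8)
The plan is to realise the two functions as $T_\infty$ of suitable elements of $C_0(G:H)$, doing all the analysis on $G$ rather than on $G/H$. Fix $\varphi\in C_0(G/H)$ and $x\in G$. Since $H$ is compact the canonical projection $q$ is proper, so $f:=\varphi\circ q$ lies in $C_0(G)$; and since $\varphi$ is defined on $G/H$, $f$ is constant on left cosets of $H$, i.e. $f\in C_0(G:H)$. I would then introduce the two auxiliary functions on $G$,
\begin{eqnarray*}
g(y)=\int_H f(x\xi y)\,d\lambda_H(\xi),\qquad h(y)=\int_H f(y\xi x)\,d\lambda_H(\xi)\qquad(y\in G),
\end{eqnarray*}
and observe that $g(y)=\int_H\varphi(x\xi yH)\,d\lambda_H(\xi)$ and $h(y)=\int_H\varphi(y\xi xH)\,d\lambda_H(\xi)$. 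Hence, once I check $g,h\in C_0(G:H)$, the fact that they are constant on left cosets of $H$ gives that $\leftidx{_{xH}}\varphi$ and $\varphi_{xH}$ are well defined on $G/H$, and applying the isometric isomorphism $T_\infty\colon C_0(G:H)\to C_0(G/H)$ recalled above identifies $\leftidx{_{xH}}\varphi=T_\infty g$ and $\varphi_{xH}=T_\infty h$, which then lie in $C_0(G/H)$, as required.

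To show $g\in C_0(G:H)$: continuity of $g$ follows from the joint continuity of $(\xi,y)\mapsto f(x\xi y)$ together with the compactness of $H$ and $\lambda_H(H)=1$ (a standard uniform-continuity-on-compacta argument). For the decay, I would fix $\varepsilon>0$, note that $|g(y)|\le\max_{\xi\in H}|f(x\xi y)|$, and deduce that $|g(y)|\ge\varepsilon$ forces $x\xi y\in K_\varepsilon:=\{\,|f|\ge\varepsilon\,\}$ for some $\xi\in H$; hence $y\in H^{-1}x^{-1}K_\varepsilon=Hx^{-1}K_\varepsilon$, which is compact, so $\{\,|g|\ge\varepsilon\,\}$ is closed in a compact set and $g\in C_0(G)$. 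Right $H$-invariance, $g(yh)=g(y)$ for $h\in H$, is immediate from $f\in C_0(G:H)$.

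The function $h$ is handled the same way, with two changes. The decay estimate becomes $\{\,|h|\ge\varepsilon\,\}\subseteq K_\varepsilon x^{-1}H$, again compact. And the right $H$-invariance of $h$ now uses the left invariance of $\lambda_H$ rather than that of $f$: substituting $\xi\mapsto (h')^{-1}\xi$ gives $h(yh')=\int_H f(yh'\xi x)\,d\lambda_H(\xi)=\int_H f(y\xi x)\,d\lambda_H(\xi)=h(y)$ for $h'\in H$ (the very same substitution is what makes the integral defining $\varphi_{xH}$ independent of the chosen coset representative). This would finish the proof. One could equivalently run the whole argument directly on $G/H$, using the continuity of the action $G\times G/H\to G/H$ in place of multiplication in $G$ and the properness of $q$ for the decay; the bookkeeping is slightly heavier there because $yH\mapsto (y\xi x)H$ is not well defined before integrating.

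The only step that genuinely uses the hypothesis — and hence the main obstacle — is the decay estimate: compactness of $H$ enters both in getting $f=\varphi\circ q\in C_0(G)$ (properness of $q$) and in keeping the ``$H$-thickenings'' $Hx^{-1}K_\varepsilon$ and $K_\varepsilon x^{-1}H$ of the compact sets $K_\varepsilon$ compact. For a non-compact $H$ neither holds and these averages need not vanish at infinity, so that is exactly where a naive attempt in the general case would break down. The continuity and invariance verifications are routine and I would not dwell on them.
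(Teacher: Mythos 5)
Your proof is correct and is essentially the paper's own argument: continuity rests on the uniform continuity of $\varphi\circ q\in C_0(G)$, and vanishing at infinity on the compactness of the $H$-thickenings $Hx^{-1}K$ (resp.\ $Kx^{-1}H$) of a compact set outside of which $|\varphi\circ q|<\varepsilon$, exactly as in the paper. The only difference --- lifting to $G$, checking the averaged functions lie in $C_0(G:H)$, and descending via the identification $T_\infty\colon C_0(G:H)\to C_0(G/H)$ instead of estimating directly on $G/H$ --- is a repackaging of the same estimates, not a different route.
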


\begin{proof}
As $\varphi oq\in C_0(G)$ is uniformly continuous, for given $\varepsilon>0$ there exists a symmetric neighborhood $U$ of $e$ such that $|\varphi o q (y)-\varphi oq(z)|<\varepsilon$, where $y,z\in G$ and $yz^{-1}\in U$ or $y^{-1}z\in U$.
Now, if $y,z\in G$ with $y^{-1}z\in U$, then for all $\xi\in H$ we have $(x\xi y)^{-1}(x\xi z)=y^{-1}z\in U$
and hence
\begin{eqnarray*}
|\leftidx{_{xH}}\varphi(yH)-\leftidx{_{xH}}\varphi(zH)| \leq \int_H|(\varphi oq)(x\xi y)-(\varphi oq)(x\xi z)|\,d\lambda_H(\xi)<\varepsilon.
\end{eqnarray*}
If $zy^{-1}\in U$, then $|\varphi_{xH}(yH)-\varphi_{xH}(zH)|<\varepsilon$. Therefore, the mappings $y\mapsto\int_H \varphi(x\xi yH)\,d\lambda_H(\xi)$ and $y\mapsto\int_H \varphi(y\xi xH)\,d\lambda_H(\xi)$ are continuous. This implies that the mappings\break $\leftidx{_{xH}}\varphi:\,yH\mapsto\int_H \varphi(x\xi yH)\,d\lambda_H(\xi)$ and $\varphi_{xH}:\,yH\mapsto\int_H \varphi(y\xi xH)\,d\lambda_H(\xi)$ are continuous.\\
Moreover, for given $\varepsilon>0$, take a compact subset $K$ of $G$ for which $|\varphi(zH)|<\varepsilon$ if $zH\notin q(K)$. Then trivially, $Hx^{-1}K$ is a compact subset of $G$ and if $yH\notin q(Hx^{-1}K)$, then $x\xi yH\notin q(K)$ and hence $|\varphi(x\xi yH)|<\varepsilon$ for all $\xi\in H$. This shows that $\leftidx{_{xH}}\varphi\in C_0(G/H)$. Similarly, $\varphi_{xH}\in C_0(G/H)$.
\end{proof}

\begin{defn}
For all $x\in G$ define
\begin{eqnarray*}
\left\{\begin{array}{clcr}
LT_{xH}:C_0(G/H)\rightarrow C_0(G/H)\\
\varphi\mapsto \leftidx{_{xH}}\varphi
\end{array}
\right.
\quad\text{ and}\quad
\left\{\begin{array}{clcr}
RT_{xH}:C_0(G/H)\rightarrow C_0(G/H)\\
\varphi\mapsto \varphi_{xH}
\end{array}
\right.
\end{eqnarray*}

\end{defn}

For all $\nu\in M(G/H)$ define
\begin{eqnarray*}
\left\{\begin{array}{clcr}
S_\nu:C_0(G) \rightarrow \mathbb{C}\\
f\mapsto \int_{G/H} T_\infty f(xH)\,d\nu(xH)
\end{array}
\right.
\end{eqnarray*}
Clearly, $S_\nu$ is a bounded linear functional on $C_0(G)$ with $\|S\|\leq 1$. Therefore, we can assign to each $\nu\in M(G/H)$ an element $m_\nu\in M(G)$ such that
\begin{eqnarray}
\int_G f(x)\,dm_{\nu}(x)=\int_{G/H} T_\infty f(xH)\,d\nu(xH)= \int_{G/H} \int_H f(x\xi)\,d\lambda_H(\xi)\,d\nu(xH)
\end{eqnarray}
for all $f\in C_0(G)$. We should note that if for a given $\nu\in M(G/H)$ we take the element $m_\nu\in M(G)$ and then return to $M(G/H)$ by using $\tilde{T}$, we obtain the same measure $\nu$, i.e.;
\begin{eqnarray}\label{mh01}
\tilde{T}(m_{\nu})=\nu\quad(\nu\in M(G/H)).
\end{eqnarray}
Equation \eqref{mh01} shows that $\|\nu\|=\|\tilde{T}(m_{\nu})\|\leq \|m_{\nu}\|\leq \|\nu\|$. Consequently,
\begin{eqnarray}\label{mh02}
\|\nu\|=\|m_\nu\|\quad(\nu\in M(G/H))
\end{eqnarray}
and
\begin{eqnarray}
\|\nu\|=\min\{\|m\|:\,m\in M(G),\,\nu=\tilde{T}(m)\}\quad(\nu\in M(G/H)).
\end{eqnarray}

\begin{lem}\label{lma01}
For a measure $m\in M(G)$, the following are equivalent:
\newcounter{mh1}
\begin{list}
{\bf(\alph{mh1})}{\usecounter{mh1}}
\item $m=m_{\tilde{T}m}$.
\item $m=m_\nu$ for some $\nu\in M(G/H)$.
\item $m(Ah)=m(A)$, where $A$ is a Borel subset of $G$ and $h\in H$.
\item $m(f_h)=m(f)$, where $f\in C_0(G)$ and $h\in H$.
\end{list}
\end{lem}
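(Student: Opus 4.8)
The plan is to establish the cycle of implications (a)$\Rightarrow$(b)$\Rightarrow$(c)$\Rightarrow$(d)$\Rightarrow$(a), which is the most economical route since (a)$\Rightarrow$(b) is immediate (take $\nu=\tilde Tm$) and (c)$\Leftrightarrow$(d) is essentially a restatement via the Riesz representation theorem. The interesting content is the passage (d)$\Rightarrow$(a), which says that right $H$-invariance of $m$ forces $m$ to lie in the image of the section $\nu\mapsto m_\nu$.

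First I would handle (b)$\Rightarrow$(c): if $m=m_\nu$, then by the defining formula for $m_\nu$ we have $\int_G f(x)\,dm(x)=\int_{G/H}\int_H f(x\xi)\,d\lambda_H(\xi)\,d\nu(xH)$ for all $f\in C_0(G)$; replacing $f$ by $f_h$ and using the translation invariance of the (normalized, hence two-sided invariant since $H$ is compact) Haar measure $\lambda_H$ inside the inner integral, $\int_H f(x\xi h)\,d\lambda_H(\xi)=\int_H f(x\xi)\,d\lambda_H(\xi)$, we get $m(f_h)=m(f)$; then passing from the functional identity to the set identity $m(Ah)=m(A)$ is a routine approximation/regularity argument (or one simply records (c)$\Leftrightarrow$(d) as the measure-theoretic/functional-analytic dictionary). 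The implication (c)$\Rightarrow$(d) is the trivial direction of that dictionary applied to indicator functions and then general $f\in C_0(G)$ by density.

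The main step is (d)$\Rightarrow$(a). Given $m$ with $m(f_h)=m(f)$ for all $h\in H$ and $f\in C_0(G)$, set $\nu:=\tilde Tm$ and I must show $m=m_\nu$, i.e. $\int_G f\,dm=\int_{G/H}T_\infty f\,d\nu$ for all $f\in C_0(G)$. Unwinding the right-hand side using $\nu=\tilde Tm$ and the definition of $\tilde T$ gives $\int_{G/H}T_\infty f(xH)\,d\tilde Tm(xH)=\int_G (T_\infty f)(xH)\,dm(x)$, so it suffices to prove $\int_G f(x)\,dm(x)=\int_G (T_\infty f\circ q)(x)\,dm(x)$, that is, $m$ does not see the difference between $f$ and the $H$-average $g(x):=\int_H f(x\xi)\,d\lambda_H(\xi)=\int_H f_\xi(x)\,d\lambda_H(\xi)$. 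But this is exactly the $H$-invariance hypothesis integrated against $\lambda_H$: using Fubini (legitimate since $H$ is compact, $f$ is bounded with compact support, and $m$ is a finite measure) one has $\int_G g\,dm=\int_H\big(\int_G f_\xi\,dm\big)d\lambda_H(\xi)=\int_H m(f_\xi)\,d\lambda_H(\xi)=\int_H m(f)\,d\lambda_H(\xi)=m(f)$, since $\lambda_H$ is normalized. This gives $m=m_\nu=m_{\tilde Tm}$, which is (a).

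The step I expect to require the most care is the Fubini interchange in (d)$\Rightarrow$(a) together with verifying that $g=T_\infty f\circ q$ is genuinely in $C_0(G)$ so that $m(g)$ makes sense — but the preliminaries already record that $T_\infty f\in C_0(G/H)$ for $f\in C_0(G)$ when $H$ is compact, and that $\varphi\circ q\in C_0(G)$ for $\varphi\in C_0(G/H)$, so $g\in C_0(G:H)$ and the pairing is well defined; the joint measurability of $(x,\xi)\mapsto f(x\xi)$ and the finiteness of $|m|\otimes\lambda_H$ make the interchange automatic. One should also note that the equivalence (a)$\Leftrightarrow$(b) already yields, via \eqref{mh02}, that such $m$ are precisely the norm-minimal preimages under $\tilde T$, which is the structural statement the lemma is really aiming at.
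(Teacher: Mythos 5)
Your proof is correct and follows essentially the same route as the paper: the substantive content in both is the Fubini interchange of $m$ with the normalized (hence bi-invariant) Haar measure of the compact group $H$, which shows that $m_{\tilde Tm}$ is the $H$-average of $m$ and that right $H$-invariance of $m$ makes this average equal to $m$ itself. The only differences are cosmetic — you run the averaging against $f\in C_0(G)$ and invoke Riesz uniqueness where the paper tests directly against indicator functions of Borel sets, and you organize the four conditions into a single cycle and explicitly handle condition (d), which the paper's written proof leaves implicit.
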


\begin{proof}
{\bf (b $\Rightarrow$ a)} If $m=m_\nu$ for some $\nu\in M(G/H)$, then $\tilde{T}m=\tilde{T}(m_\nu)=\nu$ and so $m=m_{\tilde{T}m}$.\\
{\bf (a $\Rightarrow$ c)} For all Borel subset $A$ of $G$ and $h\in H$ we have
\begin{eqnarray*}
m(Ah) &=& m_{\tilde{T}m} (Ah)\\
&=& \int_G \chi_{Ah}(x)\,dm_{\tilde{T}m}(x)\\
&=& \int_{G/H} \int_H \chi_{Ah}(x\xi)\,d\lambda_H(\xi)\,d{\tilde{T}m}(xH)\\
&=& \int_{G/H} \int_H \chi_{A}(x\xi)\,d\lambda_H(\xi)\,d{\tilde{T}m}(xH)\\
&=& m_{\tilde{T}m} (A)\\
&=& m(A),
\end{eqnarray*}
{\bf (c $\Rightarrow$ a)}
Suppose that $m\in M(G)$ satisfies (c). Then for all Borel subset $A$ of $G$ we can write
\begin{eqnarray*}
m_{\tilde{T}m} (A) &=& \int_G \chi_{A}(x)\,dm_{\tilde{T}m}(x)\\
&=& \int_{G/H} \int_H \chi_{A}(x\xi)\,d\lambda_H(\xi)\,d{\tilde{T}m}(xH)\\
&=& \int_{G} \int_H \chi_{A}(x\xi)\,d\lambda_H(\xi)\,dm(xH)\\
&=& \int_H  \int_{G} \chi_{A}(x\xi)\,dm(xH)\,d\lambda_H(\xi)\\
&=& \int_H  m(A\xi^{-1})\,d\lambda_H(\xi)\\
&=& m(A),
\end{eqnarray*}
which shows that $m=m_{\tilde{T}m}$.
\end{proof}

We denote the set of all measures $m\in M(G/H)$ satisfying equivalent conditions in Lemma~\ref{lma01} by $M(G:H)$. That is
\begin{eqnarray}
M(G:H)&=&\{m_\nu:\,\nu\in M(G/H)\}\nonumber\\
&=&\{m:\,m(Ah)=m(A),\,A\text{ is a Borel subset of }G,\,h\in H\}.
\end{eqnarray}

\begin{prop}
$M(G:H)$ is a Banach subalgebra and also a left ideal of $M(G)$.
\end{prop}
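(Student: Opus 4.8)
The plan is to deduce everything from Lemma~\ref{lma01}, working mainly with the translation characterization (c), i.e.\ $m(Ah)=m(A)$ for every Borel set $A\subseteq G$ and $h\in H$. First I would record that $M(G:H)$ is a norm-closed linear subspace of $M(G)$. Linearity is immediate, since condition (c) is preserved under linear combinations. For closedness the cleanest route is the description $M(G:H)=\{m_\nu:\nu\in M(G/H)\}$ together with the isometry $\|\nu\|=\|m_\nu\|$ of \eqref{mh02}: the map $\nu\mapsto m_\nu$ is a linear isometry from the Banach space $M(G/H)$ onto $M(G:H)$, so its range is complete, hence norm-closed in $M(G)$. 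Alternatively, using (c)$\Leftrightarrow$(d) one can write $M(G:H)=\bigcap_{f\in C_0(G),\,h\in H}\ker\big(m\mapsto m(f_h-f)\big)$, an intersection of kernels of norm-continuous functionals on $M(G)$, which is again norm-closed.

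The heart of the matter is the left ideal property. Given $m\in M(G)$ and $n\in M(G:H)$, I claim $m*n\in M(G:H)$, which I verify through (c). Fix a Borel set $A\subseteq G$ and $h\in H$. Writing the convolution product as $(m*n)(A)=\int_G n(x^{-1}A)\,dm(x)$ and using $x^{-1}(Ah)=(x^{-1}A)h$, one gets
\[
(m*n)(Ah)=\int_G n\big((x^{-1}A)h\big)\,dm(x)=\int_G n(x^{-1}A)\,dm(x)=(m*n)(A),
\]
where the middle equality is condition (c) for $n$ applied to the Borel set $x^{-1}A$. Hence $m*n$ satisfies (c), so $m*n\in M(G:H)$, which proves that $M(G:H)$ is a left ideal of $M(G)$. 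If one prefers to avoid the section $x\mapsto n(x^{-1}A)$, the same computation can be run from the double-integral formula $(m*n)(\chi_A)=\int_G\int_G\chi_A(xy)\,dn(y)\,dm(x)$ together with $\chi_{Ah}(xy)=\chi_{(x^{-1}A)h}(y)$.

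Finally, since $M(G:H)$ is a linear subspace of $M(G)$ stable under left convolution by arbitrary elements of $M(G)$, it is in particular stable under convolution of two of its own elements; combined with norm-closedness this shows $M(G:H)$ is a Banach subalgebra of $M(G)$, completing the proof.

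The only mildly delicate point is the measure-theoretic manipulation in the convolution computation — justifying the identity $(m*n)(Ah)=\int_G n\big((x^{-1}A)h\big)\,dm(x)$, i.e.\ Borel measurability of $x\mapsto n(x^{-1}A)$ and the applicability of Fubini's theorem for the complex measures $m$ and $n$ — but this is exactly the standard machinery underlying the convolution product on $M(G)$ and presents no genuine obstacle. I do not expect $M(G:H)$ to be a \emph{right} ideal in general, which is consistent with the abstract's assertion that normality of $H$ is what is needed for stronger structural properties.
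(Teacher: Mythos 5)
Your proposal is correct and follows essentially the same route as the paper: the left ideal property is verified by the identical computation $(m*n)(Ah)=\int_G n\bigl((x^{-1}A)h\bigr)\,dm(x)=\int_G n(x^{-1}A)\,dm(x)$ using characterization (c) for the right factor, and norm-closedness via characterization (d) (your kernel-intersection phrasing is just a repackaging of the paper's limit argument, and the isometry argument is a valid bonus). Nothing is missing.
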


\begin{proof}
It is easy to see that $M(G:H)$ is a subspace of $M(G)$. Now, if $m\in M(G:H)$ and $n\in M(G)$, then
\begin{eqnarray*}
(n*m)(Ah)&=& \int_G m(x^{-1}Ah)\,dn(x)\\
&=& \int_G m(x^{-1}A)\,dn(x)\\
&=&(n*m)(A),
\end{eqnarray*}
where $A$ is a Bore subset of $G$ and $h\in H$. Hence, $M(G:H)$ is a left ideal of $M(G)$. Also, if $\{m_\alpha\}_{\alpha\in I}\subseteq M(G:H)$ tends to some $m\in M(G)$, then for all $f\in C_0(G)$ and $h\in H$ we have
\begin{eqnarray*}
m(f_h) = \lim_\alpha m_\alpha (f_h) = \lim_\alpha m_\alpha (f) = m(f),
\end{eqnarray*}
which shows that $m\in M(G:H)$.
\end{proof}

Equations \eqref{mh01} and \eqref{mh02} show that the restriction
\begin{eqnarray*}
\left\{\begin{array}{clcr}
M(G:H)\rightarrow M(G/H)\\
m\rightarrow \tilde{T}m
\end{array}
\right.
\end{eqnarray*}
of $\tilde{T}$ is an isometry isomorphism of Banach spaces. Therefore, $M(G/H)$ has been identified with the closed subspace $M(G:H)$ of $M(G)$. \\

It is worthwhile to mention that $M(G:H)\cong C_0(G:H)^*$. In details, by considering the canonical isomorphisms $M(G)\rightarrow C_0(G)^*$ and $M(G/H)\rightarrow C_0(G/H)^*$, and the isomorphism $T_\infty|_{C_0(G:H)}:C_0(G:H)\rightarrow C_0(G/H)$, the mapping
\begin{eqnarray*}
\left\{\begin{array}{clcr}
U:M(G:H)\rightarrow C_0(G:H)^*\\
m_\nu\mapsto m_\nu|_{C_0(G:H)}
\end{array}
\right.
\end{eqnarray*}
is an isometry isomorphism and the following diagram commutes:
\[
  \begin{CD}
   M(G:H) @> U >> C_0(G:H)^*\\
    @ V  VV  @ AAA\\
    M(G/H) @>> > C_0(G/H)^*
  \end{CD}
  \]

\bigskip

\begin{prop}
An element $f\in L^1(G)$ belongs to $L^1(G:H)$ if and only if $\lambda_f\in M(G:H)$, i.e.;
\begin{eqnarray}
L^1(G:H) = \{f\in L^1(G):\,\lambda_f\in M(G:H)\}.
\end{eqnarray}
\end{prop}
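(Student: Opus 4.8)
The plan is to reduce both sides of the claimed equality to the single condition that $f$ is right $H$-invariant, i.e.\ $f_h=f$ in $L^1(G)$ for every $h\in H$, where $f_h(y)=f(yh)$ as in Lemma~\ref{lma01}; recall that this is precisely the definition of $L^1(G:H)$, the $L^1$-analogue of $C_0(G:H)$.

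The first step is to note that $\Delta_G$ is a continuous homomorphism of $G$ into the multiplicative group $(0,+\infty)$, so it maps the compact subgroup $H$ onto a compact subgroup of $(0,+\infty)$; since $\{1\}$ is the only such subgroup, $\Delta_G(h)=1$ for all $h\in H$, that is, right translation by an element of $H$ preserves $\lambda_G$. Granting this, for a Borel set $A\subseteq G$ and $h\in H$ I would substitute $x\mapsto xh$ in the defining integral of $\lambda_f$:
\[
\lambda_f(Ah)=\int_G\chi_{Ah}(x)\,f(x)\,d\lambda_G(x)=\int_G\chi_{Ah}(xh)\,f(xh)\,d\lambda_G(x)=\int_G\chi_A(x)\,f_h(x)\,d\lambda_G(x)=\lambda_{f_h}(A).
\]

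The equivalence then follows formally. By the description of $M(G:H)$ in Lemma~\ref{lma01}, $\lambda_f\in M(G:H)$ holds exactly when $\lambda_f(Ah)=\lambda_f(A)$ for every Borel $A\subseteq G$ and $h\in H$; by the identity above this is the same as $\lambda_{f_h}(A)=\lambda_f(A)$ for all such $A$ and $h$, i.e.\ $\lambda_{f_h}=\lambda_f$ in $M(G)$ for every $h\in H$. Since $g\mapsto\lambda_g$ is injective on $L^1(G)$ (uniqueness of the Radon--Nikodym density), this is equivalent to $f_h=f$ in $L^1(G)$ for every $h\in H$, that is, to $f\in L^1(G:H)$. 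Chaining these equivalences proves both inclusions simultaneously.

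I do not anticipate any real difficulty here. The only two points that need a little care are making the Haar-measure change of variables precise --- it is exactly the compactness of $H$ that forces the modular factor $\Delta_G(h)$ to be $1$ and lets the substitution proceed unchanged --- and, if $L^1(G:H)$ has been defined through a pointwise almost-everywhere right-invariance condition rather than through the operators $f\mapsto f_h$, reconciling the two formulations, which is done by Fubini's theorem just as in the discussion of $C_0(G:H)$ above.
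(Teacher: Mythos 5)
Your proof is correct and follows essentially the same route as the paper: a right-translation change of variables in $\lambda_G$ showing that $\lambda_f(Ah)=\lambda_{f_h}(A)$ for all Borel $A$ and $h\in H$, so that the invariance condition $\lambda_f(Ah)=\lambda_f(A)$ characterizing $M(G:H)$ (Lemma~\ref{lma01}(c)) is equivalent to $f_h=f$ in $L^1(G)$, i.e.\ $f\in L^1(G:H)$. The only cosmetic difference is that you spell out the fact that $\Delta_G\equiv 1$ on the compact subgroup $H$ and invoke injectivity of $g\mapsto\lambda_g$ explicitly, both of which the paper uses tacitly.
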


\begin{proof}
For all $f\in L^1(G)$, we have
\begin{eqnarray*}
\int_A (R_hf-f)(x)\,d\lambda_G(x) &=& \int_G \chi_{A}(x)\,(R_hf(x)-f(x))\,d\lambda_G(x)\\
&=& \int_G \chi_{A}(x)\,f(xh)\,d\lambda_G(x) -\int_G \chi_{A}(x)\,f(x)\,d\lambda_G(x)\\
&=& \int_G \chi_{Ah^{-1}}(x)\,f(x)\,d\lambda_G(x) -\int_G \chi_{A}(x)\,f(x)\,d\lambda_G(x)\\
&=& \lambda_f(Ah^{-1})-\lambda_f(A),
\end{eqnarray*}
where $h\in H$ and $A$ is a Borel subset of $G$. This shows that $\lambda_f \in M(G:H)$ just when $R_hf=f$ in $L^1(G)$ for all $h\in H$.
\end{proof}

\begin{exam}
When $\varphi\in L^1(G/H)$ we have $\varphi_\rho=\varphi oq\,.\,\rho\in L^1(G:H)$ (cf. \cite{tro}). So $\lambda_{\varphi_\rho}$ must be in $M(G:H)$. In fact, $\lambda_{\varphi_\rho}=m_{\mu_\varphi}$. Because,
\begin{eqnarray*}
\lambda_{\varphi_\rho}(g) &=& \int_G \varphi_\rho(x)\,g(x)\,d\lambda_G(x)\\
&=& \int_{G/H} T(\varphi oq\,\rho\,g)(xH)\,d\mu(xH)\\
&=& \int_{G/H} \int_H \varphi(xH)\,g(x\xi)\,d\lambda_H(\xi)\,d\mu(xH)\\
&=& \int_{G/H} \int_H g(x\xi)\,d\lambda_H(\xi)\,d\mu_\varphi(xH)\\
&=& m_{\mu_\varphi}(g),
\end{eqnarray*}
where $g\in C_0(G)$. Thus, by considering the canonical embeddings $L^1(G)\rightarrow M(G)$ and $L^1(G/H)\rightarrow M(G/H)$ we have the following commutative diagram:
\[
  \begin{CD}
  L^1(G:H) @> T >>  L^1(G/H)\\
    @ V  VV  @VV  V \\
   M(G:H) @>> \tilde{T}  >  M(G/H)
  \end{CD}
  \]

\end{exam}


\section{$M(G/H)$ as a Closed Subalgebra of $M(G)$}

In this section, by assuming the compactness of $H$, we define a convolution on $M(G/H)$ to give it the construction of a Banach algebra.
In fact, we will show that $M(G:H)$, the Banach space identified with $M(G/H)$, is a subalgebra of $M(G)$ and so the convolution of $M(G:H)$ induces a convolution on $M(G/H)$. In addition, the obtained Banach algebra $M(G/H)$ has no identity and no involution, where $H$ is a subgroup of $G$ which is not normal.

\begin{lem}\label{lma04}
For all $\varphi\in C_0(G/H)$ and $\nu\in M(G/H)$, the functions which are defined as follow are continuous functions vanishing at infinity:
\begin{eqnarray*}
\left\{\begin{array}{clcr}
\psi:G/H\rightarrow \mathbb{C}\\
yH\mapsto\int_{G/H} \varphi_{yH}(xH)\,d\nu(xH)
\end{array}
\right.
\quad\text{ and}\quad
\left\{\begin{array}{clcr}
\eta:G/H\rightarrow \mathbb{C}\\
yH\mapsto\int_{G/H} \leftidx{_{yH}}\varphi(xH)\,d\nu(xH)
\end{array}
\right.
\end{eqnarray*}
\end{lem}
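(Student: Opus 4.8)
The plan is to reduce both statements to the corresponding facts about $C_0(G)$ that were recalled just before Lemma~\ref{lma02}, namely that the maps $(x,f)\mapsto f_x$ and $(x,f)\mapsto \leftidx{_x}f$ are separately continuous $G\times C_0(G)\to C_0(G)$, and that translation preserves $C_0(G)$. I would treat $\psi$ first; the argument for $\eta$ is entirely symmetric (replacing right translations and $\mathcal R_x$ by left translations and $LT_{xH}$).

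First, observe that by Lemma~\ref{lma02} the function $\varphi_{yH}=\mathcal R_y\varphi=RT_{yH}\varphi$ lies in $C_0(G/H)$ for every $y\in G$, so the integral defining $\psi(yH)$ makes sense, namely $\psi(yH)=\nu(\varphi_{yH})$ with $\nu$ viewed as a functional on $C_0(G/H)$. Next I would check that $\psi$ is genuinely a function on $G/H$, i.e.\ that $\varphi_{yH}$ depends only on the coset $yH$: this is immediate from the averaging over $H$ in the definition $\varphi_{yH}(xH)=\int_H\varphi(y\xi xH)\,d\lambda_H(\xi)$, since replacing $y$ by $y\zeta$ with $\zeta\in H$ merely reparametrises the Haar integral over the compact group $H$ by left translation.

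For continuity, the key step is to show that $y\mapsto \varphi_{yH}$ is continuous from $G$ into $(C_0(G/H),\|\cdot\|_{\sup})$; then $\psi$ is continuous because $\nu$ is a bounded functional, and $\psi$ descends to a continuous function on $G/H$. To get this, I would pull everything up to $G$: for $f:=\varphi\circ q\in C_0(G)$ one has $\varphi_{yH}(xH)=\int_H f(y\xi x)\,d\lambda_H(\xi)=\int_H (\leftidx{_y}f)(\xi x)\,d\lambda_H(\xi)$, so $\varphi_{yH}\circ q = T_\infty(\leftidx{_y}f)$ with $T_\infty$ the norm-decreasing map $C_0(G)\to C_0(G/H)$ from Section~1. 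Since $y\mapsto \leftidx{_y}f$ is continuous $G\to C_0(G)$ and $T_\infty$ is bounded, the composite $y\mapsto \varphi_{yH}$ is continuous, as desired. (Alternatively one can run the uniform-continuity estimate directly as in the proof of Lemma~\ref{lma02}: given $\varepsilon>0$, pick a symmetric neighbourhood $U$ of $e$ with $|f(u)-f(v)|<\varepsilon$ whenever $uv^{-1}\in U$; then for $y'y^{-1}\in U$ one gets $(y'\xi x)(y\xi x)^{-1}=y'y^{-1}\in U$ for every $\xi\in H$, hence $\|\varphi_{y'H}-\varphi_{yH}\|_{\sup}\le\varepsilon$.)

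Finally, for the vanishing at infinity of $\psi$: given $\varepsilon>0$, since $\nu\in M(G/H)$ is a finite regular measure there is a compact $L\subseteq G/H$ with $|\nu|((G/H)\setminus L)<\varepsilon$, and since $\varphi\in C_0(G/H)$ there is a compact $K\subseteq G$ with $|\varphi(zH)|<\varepsilon$ whenever $zH\notin q(K)$. Choose a compact $C\subseteq G$ with $q(C)\supseteq L$. Then $HC^{-1}K$ is compact in $G$, and I claim $|\psi(yH)|$ is small once $yH\notin q(HC^{-1}K)$: for such $y$ and every $\xi\in H$, $xH\in L$, we have $y\xi x H\notin q(K)$ (otherwise $y\in H (xH)^{-1} K\subseteq HC^{-1}K$ up to the coset, a contradiction after a short bookkeeping with the $H$-averaging), so $|\varphi_{yH}(xH)|\le \varepsilon$ on $L$; splitting the integral $\nu(\varphi_{yH})$ over $L$ and its complement gives $|\psi(yH)|\le \varepsilon\|\nu\| + \|\varphi\|_{\sup}\,\varepsilon$. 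Hence $\psi\in C_0(G/H)$, and the identical argument with $\leftidx{_{yH}}\varphi\circ q = T_\infty(f_y)$ in place of $T_\infty(\leftidx{_y}f)$ gives $\eta\in C_0(G/H)$.

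The main obstacle is the last step: the bookkeeping needed to produce a \emph{single} compact set outside which $\psi$ is uniformly small, because $\varphi_{yH}$ is itself a $y$-dependent family of functions rather than a fixed one. The resolution is exactly the one above — compactness of $\supp$-control for $\varphi$ combined with tightness of the finite measure $\nu$, using that $H$ is compact so that $HC^{-1}K$ stays compact.
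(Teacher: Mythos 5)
Your proposal is correct, and on the continuity part it follows the same route as the paper (pull back to $f=\varphi\circ q$, use uniform continuity of $f$ to get $y\mapsto\varphi_{yH}$ and $y\mapsto\leftidx{_{yH}}\varphi$ norm-continuous into $C_0(G/H)$, then apply the bounded functional $\nu$). Where you genuinely diverge is the vanishing-at-infinity step, and there your version is the more complete one. The paper reuses the computation from Lemma~\ref{lma02}: it fixes $K$ with $|\varphi|<\varepsilon$ off $q(K)$ and sets $E=q(Hx^{-1}K)$, concluding $|\psi(yH)|<\varepsilon\|\nu\|$ for $yH\notin E$. But in $\psi(yH)=\int_{G/H}\varphi_{yH}(xH)\,d\nu(xH)$ the point $x$ is the integration variable of $\nu$, so $E$ as written depends on $x$ and the argument only closes if $\nu$ has compact support. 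You identified exactly this obstacle and resolved it with inner regularity of the finite measure $\nu$ (a compact $L$ carrying all but $\varepsilon$ of $|\nu|$, then the compact set $HC^{-1}K$ with $C=q^{-1}(L)$, and the split $|\psi(yH)|\le\varepsilon\|\nu\|+\varepsilon\|\varphi\|_{\sup}$); this is the argument the paper should have given. Two small blemishes, neither fatal: with the paper's conventions $\varphi_{yH}(xH)=\int_H\varphi(x\xi yH)\,d\lambda_H(\xi)=T_\infty(f_y)(xH)$, so your displayed formula $\int_Hf(y\xi x)\,d\lambda_H(\xi)$ is actually $\leftidx{_{yH}}\varphi(xH)$ (you have swapped the two labels); and for $\leftidx{_{yH}}\varphi$ the averaging over $\xi$ sits between $y$ and $x$, so it is not of the form $T_\infty(\leftidx{_y}f)$ --- there you need your alternative direct estimate ($(y\xi x)(y'\xi x)^{-1}=yy'^{-1}\in U$), which does go through. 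Since you treat both functions and both estimates, these are labelling slips rather than gaps.
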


\begin{proof}
Obviously, the mappings $yH\mapsto \leftidx{_{yH}}\varphi$ and $yH\mapsto \psi(yH)$ are continuous.
Take a compact subset $K$ of $G$ for which $|\varphi(yH)|<\varepsilon$ where $yH\notin q(K)$. Obviously, $E=q(Hx^{-1}K)$ is compact and for all $\xi\in H$,
\begin{eqnarray*}
x\xi yH\in q(K) \Leftrightarrow x\xi y\in KH \Leftrightarrow y\in \xi^{-1}x^{-1}KH \Rightarrow y\in Hx^{-1}KH \Leftrightarrow yH\in q(Hx^{-1}K)=E.
\end{eqnarray*}
Thus, if $yH\notin E$, then $x\xi yH\notin q(K)$ and so $|\psi(yH)| <\varepsilon\,\|\nu\|.$
Therefore, $\psi\in C_0(G/H)$.

\end{proof}

\bigskip

For all $\nu,\,\omega\in M(G/H)$, define
\begin{eqnarray}
\left\{\begin{array}{clcr}
S:C_0(G/H)\rightarrow \mathbb{C}\\
\varphi\rightarrow \int_{G/H} \int_{G/H} \varphi_{yH}(xH)\,d\nu(xH)\,d\omega(yH)
\end{array}
\right.
\end{eqnarray}
Then $S$ is a bounded linear functional with $\|S\|\leq\|\nu\|\,\|\omega\|$. Since
\begin{eqnarray*}
|S(\varphi)| &=& |\int_{G/H} \int_{G/H} \int_H \varphi(x\xi yH)\,d\lambda_H(\xi)\,d\nu(xH)\,d\omega(yH)|\\
&\leq & \int_{G/H} \int_{G/H} \int_H |\varphi(x\xi yH)|\,d\lambda_H(\xi)\,d|\nu|(xH)\,d|\omega|(yH)\\
&\leq& \|\varphi\|_{\sup}\,\|\nu\|\,\|\omega\|
\end{eqnarray*}
for all $\varphi\in C_0(G/H)$. Hence, $S$ determines an element $\nu *\omega\in M(G/H)$, called the {\it convolution} of $\nu$ and $\omega$, such that for all $\varphi\in C_0(G/H)$, $S(\varphi)=(\nu*\omega)(\varphi)$, i.e.;
\begin{eqnarray}\label{007}
\int_{G/H} \varphi(xH)\,d(\nu*\omega)(xH) &=& \int_{G/H} \int_{G/H} \int_H \varphi(x\xi yH)\,d\lambda_H(\xi)\,d\nu(xH)\,d\omega(yH)\\
&=& \int_{G/H} \int_{G/H} \varphi_{yH}(xH)\,d\nu(xH)\,d\omega(yH)\nonumber\\
&=& \int_{G/H} \int_{G/H} \leftidx{_{xH}}\varphi(yH)\,d\nu(xH)\,d\omega(yH).\nonumber
\end{eqnarray}

We should remark that the convolution of two measures has a simpler form if one of them is a dirac measure. In details, one may easily check that
\newcounter{mh7}
\begin{list}
{\bf(\alph{mh7})}{\usecounter{mh7}}
\item $(\nu*\delta_{xH})(\varphi)=\int_{G/H} \varphi_{xH}(yH)\,d\nu(yH)$,
\item $(\delta_{xH}*\nu)(\varphi)=\int_{G/H} \leftidx{_{xH}}\varphi(yH)\,d\nu(yH)$,
\item
\begin{eqnarray}
\delta_{xH}*\delta_{yH}=\int_H\delta_{x\xi yH}\,d\lambda_H(\xi)\quad(x,y\in G)
\end{eqnarray}
where the integral is considered as a vector-valued integral. That is
\begin{eqnarray}
(\delta_{xH}*\delta_{yH})(\varphi)=\int_H\varphi(x\xi yH)\,d\lambda_H(\xi)=\leftidx{_{xH}}\varphi(yH)=\varphi_{yH}(xH),
\end{eqnarray}
\end{list}
where $x,y\in G$, $\nu\in M(G/H)$, and $\varphi\in C_0(G/H)$.\\

As $M(G:H)$ is a Banach algebra, the isometry isomorphism of Banach spaces\break $\tilde{T}:M(G:H)\rightarrow M(G/H)$ makes $M(G/H)$ into a Banach algebra. The following lemma shows that the convolution on $M(G/H)$ is the convolution induced by the convolution on $M(G:H)$.

\begin{lem}\label{lma03}
For all $\nu,\omega\in M(G/H)$,
\newcounter{mh6}
\begin{list}
{\bf(\alph{mh6})}{\usecounter{mh6}}
\item $\nu*\omega=\tilde{T}(m_\nu*m_0)$ where $m_0\in M(G)$ satisfies $\tilde{T}(m_0)=\omega$. Specially,\break $\nu*\omega=\tilde{T}(m_\nu*m_\omega)$.
\item $m_{\nu*\omega}=m_\nu *m_\omega$.
\end{list}
\end{lem}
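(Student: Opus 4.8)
The plan is to prove part (a) directly and then deduce part (b) from the fact that the restriction of $\tilde{T}$ to $M(G:H)$ is injective. For (a), I would first record that $\tilde{T}(m_\nu * m_0)$ does not depend on the particular $m_0\in M(G)$ with $\tilde{T}(m_0)=\omega$: indeed, by the Proposition of Section~2 one has $\tilde{T}(m_\nu * m_0)=m_\nu\star\tilde{T}(m_0)=m_\nu\star\omega$, which is visibly determined by $\omega$ alone. Hence it is enough to treat the special case $\nu*\omega=\tilde{T}(m_\nu*m_\omega)$, and the general identity then follows by taking $m_0=m_\omega$.

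To establish $\nu*\omega=\tilde{T}(m_\nu*m_\omega)$ I would pair both sides with an arbitrary $\varphi\in C_0(G/H)$. Using the defining relation of $\tilde{T}$ and the convolution formula in $M(G)$,
\[
\int_{G/H}\varphi(xH)\,d\tilde{T}(m_\nu*m_\omega)(xH)=\int_G\varphi(xH)\,d(m_\nu*m_\omega)(x)=\int_G\int_G\varphi(xyH)\,dm_\nu(x)\,dm_\omega(y),
\]
where one uses $\varphi oq\in C_0(G)$, valid since $H$ is compact. For each fixed $y$ the map $x\mapsto\varphi(xyH)$ is a right translate of $\varphi oq$, hence again lies in $C_0(G)$, so the defining identity of $m_\nu$ turns the inner integral into $\int_{G/H}\varphi_{yH}(xH)\,d\nu(xH)$, which is exactly the function $\psi(yH)$ of Lemma~\ref{lma04}. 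Since $\psi\in C_0(G/H)$, applying the defining relation of $\tilde{T}$ to $\psi$ and using $\tilde{T}(m_\omega)=\omega$ (equation~\eqref{mh01}) gives $\int_G\psi(yH)\,dm_\omega(y)=\int_{G/H}\psi(yH)\,d\omega(yH)$. Comparing with \eqref{007}, this equals $\int_{G/H}\varphi\,d(\nu*\omega)$, and as $\varphi$ was arbitrary we conclude $\tilde{T}(m_\nu*m_\omega)=\nu*\omega$.

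For part (b), observe that $m_\omega\in M(G:H)$ and that $M(G:H)$ is a left ideal of $M(G)$ by the Proposition of Section~3, hence $m_\nu*m_\omega\in M(G:H)$; moreover $m_{\nu*\omega}\in M(G:H)$ by its very construction. Both elements lie in $M(G:H)$ and, by part (a), both are mapped to $\nu*\omega$ under $\tilde{T}$. Since $\tilde{T}|_{M(G:H)}$ is an isometric isomorphism onto $M(G/H)$, and in particular injective, it follows that $m_{\nu*\omega}=m_\nu*m_\omega$.

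The only point demanding care is the chain of reductions in the second paragraph: at each step one must confirm that the integrand is a $C_0$-function, so that the defining identities for $\tilde{T}$, $m_\nu$, and $m_\omega$ — all formulated only on the relevant $C_0$-spaces — legitimately apply. These verifications are exactly what Lemma~\ref{lma02}, Lemma~\ref{lma04}, and the compactness of $H$ provide (giving $\varphi oq\in C_0(G)$, stability of $C_0$ under translation, and $\psi\in C_0(G/H)$), so once they are invoked in the right order the computation is routine; no Fubini argument is needed beyond the iterated-integral definition of convolution in $M(G)$.
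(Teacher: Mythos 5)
Your argument is correct. For part (a) you follow essentially the same route as the paper: pair $\tilde{T}(m_\nu*m_0)$ against $\varphi\in C_0(G/H)$, convert the inner integral over $m_\nu$ into an integral over $\nu$ via the defining identity of $m_\nu$, recognize the resulting function as the $\psi$ of Lemma~\ref{lma04}, and then use $\tilde{T}(m_0)=\omega$ together with \eqref{007}; your preliminary reduction to $m_0=m_\omega$ (via $\tilde{T}(m_\nu*m_0)=m_\nu\star\tilde{T}m_0$) is harmless but not really needed, since the paper runs the identical computation for an arbitrary $m_0$ with $\tilde{T}(m_0)=\omega$. Where you genuinely diverge is part (b): the paper proves $m_{\nu*\omega}=m_\nu*m_\omega$ by a second direct computation, testing both measures against $f\in C_0(G)$ and unwinding a fourfold iterated integral over $G/H$ and $H$, whereas you deduce it abstractly from (a) by observing that $m_\nu*m_\omega\in M(G:H)$ (left-ideal property), that $m_{\nu*\omega}\in M(G:H)$ by construction, that both have image $\nu*\omega$ under $\tilde{T}$ (using \eqref{mh01} and part (a)), and that $\tilde{T}|_{M(G:H)}$ is injective by Lemma~\ref{lma01}. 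Your version of (b) is shorter and less error-prone, at the cost of leaning on the structural results of Section~3; the paper's computation is self-contained and, as a by-product, exhibits the explicit integral identity relating $m_{\nu*\omega}$ and $m_\nu*m_\omega$ on test functions. Both are valid.
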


\begin{proof}
{\bf (a) } Suppose that $m_0\in M(G)$ and $\tilde{T}(m_0)=\omega$. Then for all $\varphi\in C_0(G/H)$ we have
\begin{eqnarray*}
\tilde{T}(m_\nu*m_0)(\varphi) &=& \int_G (\varphi oq)(x)\,d(m_\nu*m_0)(x)\\
&=& \int_G \int_G R_y(\varphi oq)(x)\,dm_\nu(x)\,dm_0(y)\\
&=& \int_G \int_{G/H}\int_H R_y(\varphi oq)(x\xi)\,d\lambda_H(\xi)\,d\nu(xH)\,dm_0(y)\\
&=& \int_G \int_{G/H}\int_H \varphi(x\xi yH)\,d\lambda_H(\xi)\,d\nu(xH)\,dm_0(y)\\
&=& \int_G \int_{G/H} \varphi_{yH}(xH)\,d\nu(xH)\,dm_0(y)\\
&=& \int_G \psi(yH)\,dm_0(y),
\end{eqnarray*}
where $\psi\in C_0(G/H)$ is defined by $\psi(yH)=\int_{G/H} \varphi_{yH}(xH)\,d\nu(xH)$. Therefore,
\begin{eqnarray*}
\tilde{T}(m_\nu*m_0)(\varphi) &=& \int_{G/H} \psi(yH)\,d(\tilde{T}m_0)(yH)\\
&=& \int_{G/H} \int_{G/H} \varphi_{yH}(xH)\,d\nu(xH)\,d\omega(yH)\\
&=& \int_{G/H} \varphi\,d(\nu*\omega)(xH).
\end{eqnarray*}
\medskip
{\bf (b)} For all $f\in C_0(G)$ we can write
\begin{eqnarray*}
\int_G f(x)\,dm_{\nu*\omega}(x) &=& \int_{G/H}T_\infty f(xH)\,d(\nu*\omega)(xH)\\
&=& \int_{G/H}\int_{G/H}(T_\infty f)_{yH}(xH)\,d\nu(xH)\,d\omega(yH)\\
&=& \int_{G/H}\int_{G/H}\int_H (T_\infty f)(x\xi yH)\,d\lambda_H(\xi)\,d\nu(xH)\,d\omega(yH)\\
&=& \int_{G/H}\int_{G/H}\int_H \int_H f(x\xi y\eta)\,d\eta\,d\lambda_H(\xi)\,d\nu(xH)\,d\omega(yH)\\
&=& \int_{G/H}\int_H \big(\int_H \int_{G/H}R_{y\eta}f(x\xi)\,d\lambda_H(\xi)\,d\nu(xH)\big )\,d\eta\,d\omega(yH)\\
&=& \int_G \int_H \int_{G/H}R_{y}f(x\xi)\,d\lambda_H(\xi)\,d\nu(xH)\,dm_\omega(y)\\
&=& \int_G \int_G R_{y}f(x)\,dm_\nu(x)\,dm_\omega(y)\\
&=& \int_G f(x)\,d(m_\nu*m_\omega)(x)
\end{eqnarray*}
\end{proof}

\begin{thm}
Let $H$ be a compact subgroup of $G$. Then $M(G/H)$  with the convolution defined by \eqref{007} is a Banach algebra.
\end{thm}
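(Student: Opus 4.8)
The plan is to transfer the Banach-algebra structure of $M(G:H)$ — which we have already shown (in the Proposition of Section~3) to be a Banach subalgebra of $M(G)$ — over to $M(G/H)$ via the correspondence $\nu\mapsto m_\nu$, exactly as announced in the paragraph preceding Lemma~\ref{lma03}. So the first step is to record that $\nu\mapsto m_\nu$ is a linear bijection of $M(G/H)$ onto $M(G:H)$: linearity is immediate from the defining identity $\int_G f\,dm_\nu=\int_{G/H}T_\infty f\,d\nu$ for $f\in C_0(G)$, injectivity follows from \eqref{mh01} (since $\tilde T(m_\nu)=\nu$), and surjectivity onto $M(G:H)$ is the very definition of $M(G:H)$. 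By \eqref{mh02} this bijection is isometric, hence an isometric isomorphism of Banach spaces.

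Next I would invoke Lemma~\ref{lma03}(b), which asserts $m_{\nu*\omega}=m_\nu*m_\omega$ for all $\nu,\omega\in M(G/H)$; this says precisely that $\nu\mapsto m_\nu$ intertwines the convolution \eqref{007} on $M(G/H)$ with the convolution of $M(G)$ restricted to the subalgebra $M(G:H)$. Consequently $\nu\mapsto m_\nu$ is an isometric \emph{algebra} isomorphism of $(M(G/H),*)$ onto the Banach algebra $M(G:H)$, and every algebra axiom is inherited. In particular, bilinearity of \eqref{007} and the submultiplicativity $\|\nu*\omega\|\le\|\nu\|\,\|\omega\|$ (which was in any case verified directly in the estimate on the functional $S$ used to define $\nu*\omega$) carry over, and associativity follows from $m_{(\nu*\omega)*\sigma}=(m_\nu*m_\omega)*m_\sigma=m_\nu*(m_\omega*m_\sigma)=m_{\nu*(\omega*\sigma)}$ together with injectivity of $\nu\mapsto m_\nu$. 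Since $M(G/H)$ is already complete in the total-variation norm, this completes the proof.

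There is no substantive obstacle: all the analytic work — convergence of the iterated integrals in \eqref{007}, the fact that the auxiliary functions $\psi,\eta$ of Lemma~\ref{lma04} lie in $C_0(G/H)$ so that \eqref{007} genuinely defines a regular Borel measure, and the Fubini manipulations identifying $m_{\nu*\omega}$ with $m_\nu*m_\omega$ in Lemma~\ref{lma03} — has already been absorbed into Lemmas~\ref{lma04} and~\ref{lma03}. The theorem is then the bookkeeping observation that an isometric algebra isomorphism maps a Banach algebra to a Banach algebra. The one point worth not skipping is the justification that $\nu\mapsto m_\nu$ is a bijection onto $M(G:H)$, so that the word ``isomorphism'' is warranted; both surjectivity and injectivity are one-line consequences of the material in Section~3.
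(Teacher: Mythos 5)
Your proposal is correct and follows essentially the same route as the paper: the paper likewise reduces everything to Lemma~\ref{lma03}, proving associativity via $(\nu*\omega)*\kappa=\tilde{T}(m_{(\nu*\omega)}*m_\kappa)=\tilde{T}\big((m_\nu*m_\omega)*m_\kappa\big)=\tilde{T}\big(m_\nu*(m_\omega*m_\kappa)\big)=\nu*(\omega*\kappa)$, with submultiplicativity already contained in the norm estimate for the functional $S$ defining \eqref{007}. Your phrasing in terms of the isometric algebra isomorphism $\nu\mapsto m_\nu$ onto $M(G:H)$ is just the inverse of the paper's map $\tilde{T}|_{M(G:H)}$, so the two arguments are the same.
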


\begin{proof}
For all $\nu,\omega,\kappa\in M(G/H)$, by using Lemma \ref{lma03}, we have
\begin{eqnarray*}
(\nu*\omega)*\kappa &=&  \tilde{T}(m_{(\nu*\omega)}*m_\kappa) = \tilde{T}\big((m_\nu*m_\omega)*m_\kappa\big) \\
&=& \tilde{T}\big( m_\nu*(m_\omega*m_\kappa)\big) =\tilde{T}( m_\nu*m_{\omega*\kappa}) = \nu*(\omega*\kappa).
\end{eqnarray*}

\end{proof}

We should notice that if $H$ is a normal subgroup of $G$, then $G/H$ is a locally compact topological group and the convolution on $M(G/H)$ defined by \eqref{007} coincides with the well-known convolution on the measure algebra of $G/H$. In this case, $M(G/H)$ is a unital involutive Banach algebra whose identity is $\delta_{eH}$. \\
When $H$ is a compact subgroup of $G$, the Banach algebra $M(G/H)$ has a right identity $\delta_{eH}$. To show this, let $\nu\in M(G/H)$. Then for all $\varphi\in C_0(G/H)$,
\begin{eqnarray*}
(\nu*\delta_{eH})(\varphi) &=& \int_{G/H}\int_{G/H}\int_H \varphi(x\xi yH)\,d\lambda_H(\xi)\,d\nu(xH)\,d\delta_{eH}(yH)\\
&=& \int_{G/H}\int_H \varphi(x\xi eH)\,d\lambda_H(\xi)\,d\nu(xH)\\
&=& \int_{G/H}\int_H \varphi(xH)\,d\lambda_H(\xi)\,d\nu(xH)\\
&=& \nu(\varphi).
\end{eqnarray*}
The following theorem indicates that a necessary and sufficient condition to have an identity in $M(G/H)$ is that $H$ is a normal subgroup.

\begin{prop}\label{unital normal}
In Banach algebra $M(G/H)$, $\delta_{eH}$ satisfies $\delta_{eH}*\mu_\varphi=\mu_\varphi$ for all $\varphi\in L^1(G/H)$ if and only if $H$ is a normal subgroup of $G$.
\end{prop}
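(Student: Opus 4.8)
The plan is to unwind formula \eqref{007} for the convolution, turn the stated relation into a pointwise functional equation on $C_0(G/H)$, and then recognize that equation as the assertion that a push-forward of $\lambda_H$ is a point mass.

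First I would compute, for $\varphi\in L^1(G/H)$ and an arbitrary $g\in C_0(G/H)$, applying \eqref{007} with $\nu=\delta_{eH}$ and $\omega=\mu_\varphi$ (so that $d\omega(yH)=\varphi(yH)\,d\mu(yH)$):
\begin{eqnarray*}
(\delta_{eH}*\mu_\varphi)(g)&=&\int_{G/H}\Big(\int_H g(\xi yH)\,d\lambda_H(\xi)\Big)\,\varphi(yH)\,d\mu(yH)\\
&=&\int_{G/H}\leftidx{_{eH}}g(yH)\,\varphi(yH)\,d\mu(yH),
\end{eqnarray*}
to be compared with $\mu_\varphi(g)=\int_{G/H}g(yH)\,\varphi(yH)\,d\mu(yH)$. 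Thus $\delta_{eH}*\mu_\varphi=\mu_\varphi$ for every $\varphi\in L^1(G/H)$ holds iff $\int_{G/H}\big(\leftidx{_{eH}}g-g\big)\,\varphi\,d\mu=0$ for all $\varphi\in L^1(G/H)$ and all $g\in C_0(G/H)$, i.e. iff $\leftidx{_{eH}}g=g$ $\mu$-almost everywhere for every $g\in C_0(G/H)$. Since $\mu\neq 0$ and its support is a nonempty closed $G$-invariant subset of $G/H$ (quasi-invariance gives $x(\supp\mu)=\supp\mu$, and transitivity of the $G$-action then forces $\supp\mu=G/H$), and since $\leftidx{_{eH}}g$ and $g$ are continuous by Lemma~\ref{lma02}, this upgrades to the everywhere identity
\begin{eqnarray*}
\int_H g(\xi yH)\,d\lambda_H(\xi)=g(yH)\qquad(y\in G,\ g\in C_0(G/H)).
\end{eqnarray*}

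Next I would read this measure-theoretically. Fix $y\in G$ and let $\nu_y$ be the push-forward of the normalized Haar measure $\lambda_H$ under the continuous map $\Phi_y\colon H\to G/H$, $\xi\mapsto\xi yH$; the displayed identity says precisely that $\int_{G/H}g\,d\nu_y=g(yH)$ for all $g\in C_0(G/H)$, that is $\nu_y=\delta_{yH}$. Because $\lambda_H$ has full support $H$, one has $\supp\nu_y=\overline{\Phi_y(H)}=\overline{q(Hy)}=q(Hy)$, the last equality because $Hy$ is compact and $q$ is continuous. Hence $\nu_y=\delta_{yH}$ forces $q(Hy)=\{yH\}$, i.e. $y^{-1}\xi y\in H$ for all $\xi\in H$, i.e. $y^{-1}Hy\subseteq H$; imposing this for every $y\in G$ (and applying it to $y^{-1}$) gives $y^{-1}Hy=H$ for all $y$, so $H$ is normal in $G$. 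Conversely, if $H\trianglelefteq G$ then $\xi yH=y(y^{-1}\xi y)H=yH$ for $\xi\in H$, whence $\int_H g(\xi yH)\,d\lambda_H(\xi)=g(yH)$ (using $\lambda_H(H)=1$, as $H$ is compact); reversing the first computation yields $\delta_{eH}*\mu_\varphi=\mu_\varphi$ for all $\varphi$, which is also clear from the already noted fact that $\delta_{eH}$ is the identity of $M(G/H)$ when $H$ is normal.

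The only inputs used beyond routine manipulation are two standard facts — a strongly quasi-invariant measure on $G/H$ has full support, and the support of the push-forward of a full-support measure under a continuous map is the closure of the image — so I do not anticipate a real obstacle. The conceptual heart is the identification carried out in the first paragraph: convolving on the left by $\delta_{eH}$ replaces $g$ by its $\lambda_H$-average over the coset image $q(Hy)$, and this averaging operation is the identity precisely when $Hy\subseteq yH$ for every $y$, i.e. precisely when $H$ is normal.
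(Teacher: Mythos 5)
Your proof is correct, and it rests on the same basic observation as the paper's: convolving on the left by $\delta_{eH}$ replaces a function by its $\lambda_H$-average over the orbit $q(Hy)$, and this averaging is the identity precisely when every such orbit is a singleton, i.e. when $H$ is normal. The execution, however, is genuinely different. The paper treats only the nontrivial direction, and does so by exhibiting a single witness: choosing $\xi_0xH\neq xH$ and $\varphi\in C_c^+(G/H)$ with $\varphi(xH)=0<\varphi(\xi_0xH)$, it notes $\int_H\varphi(\xi xH)\,d\lambda_H(\xi)>0=\varphi(xH)$ and concludes $\delta_{eH}*\mu_\varphi\neq\mu_\varphi$, leaving the converse to the earlier remark that $\delta_{eH}$ is the identity of $M(G/H)$ when $H$ is normal. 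You instead test against all of $C_0(G/H)$, use the full support of the strongly quasi-invariant measure $\mu$ (correctly derived from quasi-invariance plus transitivity) to upgrade the a.e.\ identity $\leftidx{_{eH}}g=g$ to an everywhere identity, and then read that identity as saying the pushforward of $\lambda_H$ under $\xi\mapsto\xi yH$ equals $\delta_{yH}$, comparing supports. This is heavier machinery than strictly needed -- the support comparison could be replaced by the paper's bump-function trick applied to your everywhere identity -- but it buys real advantages: it handles cleanly the passage from equality of measures to a pointwise statement, a point where the paper is loose (its line ``$\delta_{eH}*\mu_\varphi=\leftidx{_{eH}}\varphi$'' identifies a measure with a function while silently dropping the density factor $\lambda(\xi,xH)$, and the final display has the typo $\delta_{xH}$), and it supplies an explicit proof of the ``if'' direction that the paper only asserts.
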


\begin{proof}
Suppose that $H\ntrianglelefteq G$. Then there exists some $x\in G$ and $\xi_0\in H$ for which\break $\xi_0xH\neq xH$. By taking a function $\varphi\in C_c^+(G/H)$ with $\varphi(xH)=0$ and $\varphi(\xi_0xH)>0$, we get a non-negative and non-zero continuous function $H\rightarrow \mathbb{C}$, $\xi\mapsto \varphi(\xi xH)$, which implies that $\leftidx{_{eH}}\varphi = \int_H \varphi(\xi xH)\,d\lambda_H(\xi) >0$. Therefore,
\begin{eqnarray*}
0=\varphi(xH) \neq \leftidx{_{xH}}\varphi(xH) = \int_H \varphi(\xi xH)\,d\lambda_H(\xi) >0
\end{eqnarray*}
On the other hand, $\delta_{eH}* \mu_\varphi=\leftidx{_{xH}}\varphi$. So $\delta_{xH}*\mu_\varphi\neq\mu_\varphi$
\end{proof}

\begin{prop}
When $H$ is a compact subgroup of $G$, the following statements are equivalent:
\newcounter{mh9}
\begin{list}
{\bf(\alph{mh9})}{\usecounter{mh9}}
\item $H$ is a normal subgroup of $G$.
\item $M(G/H)$ has an identity.
\item $M(G/H)$ has an approximate identity.
\end{list}
\end{prop}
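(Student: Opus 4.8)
The plan is to prove the cycle of implications (a) $\Rightarrow$ (b) $\Rightarrow$ (c) $\Rightarrow$ (a). The implication (a) $\Rightarrow$ (b) is already remarked in the text: when $H$ is normal, $G/H$ is a locally compact group and $M(G/H)$ is the classical measure algebra, which is unital with identity $\delta_{eH}$. The implication (b) $\Rightarrow$ (c) is trivial, since a two-sided identity is in particular an approximate identity. So the real content is (c) $\Rightarrow$ (a), and that is where I expect the main obstacle to lie.

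For (c) $\Rightarrow$ (a), I would argue by contraposition: assume $H$ is \emph{not} normal and show $M(G/H)$ has no approximate identity. Suppose, towards a contradiction, that $\{e_\alpha\}$ is a bounded approximate identity for $M(G/H)$ (if one exists, one may always pass to a bounded one on a closed subalgebra of a dual space, or one can work directly with the net). Fix $x\in G$ and $\xi_0\in H$ with $\xi_0 xH\neq xH$, and choose $\varphi\in C_c^+(G/H)$ with $\varphi(xH)=0$ but $\varphi(\xi_0 xH)>0$, exactly as in the proof of Proposition~\ref{unital normal}. The key computation is to pair $e_\alpha * \mu_\varphi$ against a suitable test function in $C_0(G/H)$. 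Using formula \eqref{007} together with part (b) of the displayed identities, $\delta_{yH}*\mu_\varphi = \leftidx{_{yH}}\varphi$ evaluated via the right-module structure; more precisely, for any $\omega\in M(G/H)$ one has $(\omega*\mu_\varphi)(\psi)=\int_{G/H}\leftidx{_{yH}}\varphi$-type expressions, so the left factor of a convolution always ``sees'' $\varphi$ only through its $H$-averages $\leftidx{_{yH}}\varphi(zH)=\int_H\varphi(y\eta zH)\,d\lambda_H(\eta)$. The point is that $\leftidx{_{eH}}\varphi(xH)=\int_H\varphi(\eta xH)\,d\lambda_H(\eta)>0$ while $\varphi(xH)=0$, so the function $\mu_\varphi$ is \emph{not} in the closed range of left convolution by point masses near $eH$, hence not a limit of $e_\alpha*\mu_\varphi$.

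Concretely, I would establish that left convolution by any $\omega\in M(G/H)$ lands in the closed subspace
\[
A_H=\overline{\operatorname{span}}\{\mathcal{R}_\xi\text{-averaged measures}\}=\{\nu\in M(G/H):\ \nu=\tfrac1{\lambda_H(H)}\textstyle\int_H (\delta_{\xi H}\text{-average})\,\},
\]
i.e.\ the image of the averaging projection $P$ on $M(G/H)$ induced by $\leftidx{_{eH}}(\cdot)$. Since $P$ is a bounded idempotent with $P(\delta_{xH})\ne\delta_{xH}$ when $H$ is non-normal, and since $P(\omega*\nu)=\omega*\nu$ for all $\omega,\nu$ (because the very definition \eqref{007} of the convolution already integrates over $H$ on the left), any approximate identity $e_\alpha$ would force $\nu=\lim e_\alpha*\nu\in PM(G/H)$ for all $\nu$, whence $P=\mathrm{id}$, contradicting $P\delta_{xH}\ne\delta_{xH}$. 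This is the cleanest route; the main obstacle is checking carefully that the convolution is genuinely $H$-averaged on the \emph{left} — that is, that $P\circ(\omega*\cdot)=\omega*\cdot$, equivalently $\delta_{eH}*(\omega*\nu)=\omega*\nu$ — which follows by associativity from $\delta_{eH}*\delta_{yH}=\int_H\delta_{\xi yH}\,d\lambda_H(\xi)$ and the fact that $\delta_{eH}$ is a right identity but needs to be shown to act as a left projection on the whole algebra. Once that identity is in hand, the contradiction with the existence of an approximate identity is immediate, completing (c) $\Rightarrow$ (a) and hence the equivalence.
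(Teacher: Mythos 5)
Your reduction to (c) $\Rightarrow$ (a) and the instinct to play the approximate identity against the averaging projection $P=\delta_{eH}*(\cdot)$ are sound, but the key identity you rely on --- $P(\omega*\nu)=\delta_{eH}*(\omega*\nu)=\omega*\nu$ for all $\omega,\nu$ --- is false, and the step you yourself flagged as the main obstacle is exactly where the argument breaks. If that identity held, then taking $\nu=\delta_{eH}$ (the right identity) would give $P\omega=P(\omega*\delta_{eH})=\omega*\delta_{eH}=\omega$ for every $\omega$, i.e.\ $\delta_{eH}$ would unconditionally be a left identity, contradicting Proposition~\ref{unital normal} for non-normal $H$ and rendering your intended contradiction vacuous. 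Concretely, associativity gives $\delta_{eH}*(\delta_{xH}*\delta_{yH})=(\delta_{eH}*\delta_{xH})*\delta_{yH}$, which evaluated at $\varphi$ is $\int_H\int_H\varphi(\eta x\xi yH)\,d\lambda_H(\eta)\,d\lambda_H(\xi)$, not $\int_H\varphi(x\xi yH)\,d\lambda_H(\xi)$: the $H$-average built into \eqref{007} sits \emph{between} the two factors, so the product of two measures is not left-$H$-averaged, and the image of left convolution does not land in $\operatorname{ran}P$.

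The repair is to apply the projection to the right-hand factor \emph{before} convolving rather than to the product afterwards. What is true, again by associativity together with $\omega*\delta_{eH}=\omega$, is
\begin{eqnarray*}
\omega*\nu=(\omega*\delta_{eH})*\nu=\omega*(\delta_{eH}*\nu)\quad(\omega,\nu\in M(G/H)),
\end{eqnarray*}
so left convolution by any $\omega$ only sees $\delta_{eH}*\nu$. Hence if $\{e_\alpha\}$ is a (left) approximate identity, the single net $e_\alpha*\nu=e_\alpha*(\delta_{eH}*\nu)$ converges both to $\nu$ and to $\delta_{eH}*\nu$, forcing $\delta_{eH}*\nu=\nu$ for all $\nu$; Proposition~\ref{unital normal} then yields that $H$ is normal. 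This is precisely the paper's proof. Note that no boundedness of the net is needed, and your construction of $\varphi$ with $\varphi(xH)=0<\leftidx{_{eH}}\varphi(xH)$ is not needed here either: that is exactly the content of Proposition~\ref{unital normal}, which you can simply invoke.
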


\begin{proof}
We need to prove $(c)\Rightarrow (a)$. If $M(G/H)$ has an approximate identity $\{\nu_\alpha\}$, then for all $\mu\in M(G/H)$ we have
\begin{eqnarray*}
\delta_{eH}*\mu &=& \lim_\alpha(\nu_\alpha*\delta_{eH})*\mu\\
&=&  \lim_\alpha\nu_\alpha*\mu\\
&=& \mu.
\end{eqnarray*}
This together with Proposition~\ref{unital normal} imply that $H$ must be normal.
\end{proof}

\begin{cor}
If $H$ is a compact subgroup of $G$, then $M(G/H)$ is amenable if and only if $H$ is a normal open subgroup of $G$ and $G$, or equivalently $G/H$, is amenable.
\end{cor}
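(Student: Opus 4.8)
The plan is to reduce the statement to two inputs: the standard fact that an amenable Banach algebra possesses a bounded approximate identity, and the characterization (due to Dales, Ghahramani and Helemskii) that the measure algebra $M(\Gamma)$ of a locally compact group $\Gamma$ is amenable if and only if $\Gamma$ is discrete and amenable. Everything else is an assembly of the preceding Proposition with elementary facts about amenable groups and open subgroups.

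For the forward implication, suppose $M(G/H)$ is amenable. Then it has a bounded approximate identity, in particular an approximate identity, so the preceding Proposition forces $H$ to be a normal subgroup of $G$. Once $H$ is normal, $G/H$ is a locally compact group and, as remarked just after Theorem~4.5, the convolution $\eqref{007}$ on $M(G/H)$ is exactly the usual convolution of the measure algebra of the group $G/H$; hence the Dales--Ghahramani--Helemskii theorem applies and gives that $G/H$ is discrete and amenable. Since $H$ is closed, $G/H$ being discrete is equivalent to $H$ being open in $G$. Moreover, since $H$ is compact (hence amenable) and normal in $G$, amenability of $G/H$ is equivalent to amenability of $G$: a quotient of an amenable group is amenable, and an extension of an amenable group ($G/H$) by an amenable group ($H$) is amenable.

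For the converse, assume $H$ is a normal open subgroup of $G$ and that $G$ --- equivalently $G/H$ --- is amenable. Then $G/H$ is a discrete amenable group, $M(G/H)=\ell^1(G/H)$ as a Banach algebra, and Johnson's theorem that $\ell^1$ of a discrete group is amenable precisely when that group is amenable shows that $M(G/H)$ is amenable.

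The only non-routine ingredient is the external characterization of amenability of measure algebras; the main point requiring care is the \emph{order} of the argument in the forward direction: one must first use the preceding Proposition to pass from an approximate identity in $M(G/H)$ to the normality of $H$, because the measure-algebra theorem is only meaningful after we know $G/H$ carries a group structure.
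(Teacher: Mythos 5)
Your proof is correct. The paper states this corollary without giving any proof, and your argument supplies exactly the reasoning the authors evidently intend: the preceding Proposition (approximate identity $\Rightarrow$ $H$ normal, legitimate because an amenable Banach algebra has a bounded approximate identity), the Dales--Ghahramani--Helemskii characterization of amenable measure algebras once $G/H$ is known to be a group, the identification of discreteness of $G/H$ with openness of $H$, and the standard permanence properties of amenable groups using that the compact group $H$ is amenable. Your remark about the order of the steps in the forward direction is exactly the right point of care.
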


We should mention that if $A$ is an involutive Banach algebra with a right (left) identity, then it has an identity. Since if $e$ is a right identity, for all $a\in A$ we have
\begin{eqnarray*}
e^**a = (a^* *e)^*=a
\end{eqnarray*}
which shows that $e^*$ is a left identity and hence $e=e^*$ is the identity of $A$.

\begin{cor}
There is an involution on Banach algebra $M(G/H)$ only if $H$ is a normal subgroup of $G$.
\end{cor}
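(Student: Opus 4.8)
The plan is to derive this corollary directly from the preceding results by contraposition. Suppose $M(G/H)$ admits an involution. Since we have already shown that $\delta_{eH}$ is a right identity for the Banach algebra $M(G/H)$, and since the remark immediately preceding the corollary shows that an involutive Banach algebra possessing a right identity necessarily has a two-sided identity, it follows that $M(G/H)$ is unital. Then the equivalence in the preceding proposition (the statements (a), (b), (c) for compact $H$) — specifically the implication ``$M(G/H)$ has an identity $\Rightarrow$ $H$ is normal'' — forces $H$ to be a normal subgroup of $G$. This chain of implications is exactly the assertion of the corollary.

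Concretely, I would write: assume $*$ is an involution on $M(G/H)$. We know $\nu * \delta_{eH} = \nu$ for every $\nu \in M(G/H)$. Applying the involution, $\delta_{eH}^* * \nu^* = (\nu * \delta_{eH})^* = \nu^*$ for all $\nu$; since $*$ is a bijection of $M(G/H)$, $\omega \mapsto \omega^*$ ranges over all of $M(G/H)$, so $\delta_{eH}^* * \omega = \omega$ for every $\omega \in M(G/H)$, i.e. $\delta_{eH}^*$ is a left identity. A left identity together with a right identity coincide and give a two-sided identity, so $M(G/H)$ is unital. By the Proposition characterizing when $M(G/H)$ has an identity (equivalently, an approximate identity) for compact $H$, this happens only if $H$ is normal in $G$. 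Hence the existence of an involution implies $H \trianglelefteq G$.

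I do not anticipate a genuine obstacle here: every ingredient — that $\delta_{eH}$ is a right identity, the ``right identity $+$ involution $\Rightarrow$ identity'' observation, and the ``identity $\Rightarrow$ $H$ normal'' implication — is already established in the excerpt, so the proof is a short three-line deduction. The only point requiring a word of care is that the converse is intentionally not claimed: when $H$ is normal, $M(G/H)$ is the ordinary measure algebra of the group $G/H$ and does carry its standard involution, so the word ``only if'' is exactly the right formulation and no equivalence needs to be proved.
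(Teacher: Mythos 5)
Your proposal is correct and follows essentially the same route the paper intends: combine the observation that $\delta_{eH}$ is a right identity with the remark that an involutive Banach algebra with a right identity is unital, then invoke the proposition that $M(G/H)$ is unital only when $H\trianglelefteq G$. The only cosmetic difference is that you pass through bijectivity of the involution, whereas the paper computes $e^**a=(a^**e)^*=a$ directly; both are valid.
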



\section{Banach Subalgebra $L^1(G/H)$ of $M(G/H)$}

When $H$ is a compact subgroup of $G$ and $G/H$ has been attached to a strongly quasi-invariant measure $\mu$, then for all $\varphi\in L^1(G/H)$, the function defined almost everywhere by\break $\varphi_\rho(x)= \varphi oq(xH)\,\rho(x)$ belongs to $L^1(G)$, $\varphi=Tf$, and $\|\varphi_\rho\|_1=\|\varphi\|_1$.

\begin{thm}
When $G/H$ has been attached to a strongly quasi-invariant measure $\mu$, $L^1(G/H)$ is an ideal of $M(G/H)$. More precisely, for all $\varphi\in L^1(G/H)$ and $\nu\in M(G/H)$,
\begin{eqnarray}\label{mh03}
(\varphi*\nu)(xH)= \int_{G/H} \Delta_G(y^{-1})\int_H\,\varphi(x\xi y^{-1}H)\,\frac{\rho(x\xi y^{-1})}{\rho(x)}\,d\lambda_H(\xi)\,d\nu(yH)
\end{eqnarray}
and
\begin{eqnarray}
(\nu*\varphi)(xH)= \int_{G/H}\int_H\varphi(\xi y^{-1}xH)\,\lambda(\xi y^{-1},xH)\,d\lambda_H(\xi)\,d\nu(yH)
\end{eqnarray}
for $\mu$-almost all $yH\in G/H$.
\end{thm}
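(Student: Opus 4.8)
The plan is to establish the two formulas by testing both sides against an arbitrary $\varphi \in C_0(G/H)$ and then reducing everything, via Lemma~\ref{lma03}, to the known convolution in $M(G)$ together with the Weil-type formula \eqref{rho2} and the transfer map $T = T_\rho$. Concretely, for $\varphi \in L^1(G/H)$ we know $\mu_\varphi \in M(G/H)$ corresponds to $m_{\mu_\varphi} = \lambda_{\varphi_\rho} \in M(G:H)$, where $\varphi_\rho = (\varphi\circ q)\,\rho \in L^1(G)$ (this is the Example preceding Section~4). By Lemma~\ref{lma03}(a), $\mu_\varphi * \nu = \tilde{T}(\lambda_{\varphi_\rho} * m_\nu)$ and $\nu * \mu_\varphi = \tilde{T}(m_\nu * \lambda_{\varphi_\rho})$. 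So the whole computation is: convolve in $M(G)$, observe the result is again absolutely continuous with respect to $\lambda_G$ (since $L^1(G)$ is a two-sided ideal in $M(G)$), and apply $T_\rho$ to land back in $L^1(G/H)$, then identify the resulting density.

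For the first identity, I would compute $\lambda_{\varphi_\rho} * m_\nu$. Pairing with $g \in C_c(G)$ gives $\int_G \int_G g(xy)\,\varphi_\rho(x)\,d\lambda_G(x)\,dm_\nu(y)$; a left-invariance substitution $x \mapsto xy^{-1}$ together with the modular factor $\Delta_G(y^{-1})$ turns this into $\int_G\int_G g(x)\,\varphi_\rho(xy^{-1})\,\Delta_G(y^{-1})\,d\lambda_G(x)\,dm_\nu(y)$, so $\lambda_{\varphi_\rho} * m_\nu = \lambda_{h}$ with $h(x) = \int_G \Delta_G(y^{-1})\,\varphi_\rho(xy^{-1})\,dm_\nu(y)$ after passing $\nu$ down to $m_\nu$. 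Then $\mu_\varphi * \nu = \tilde{T}(\lambda_h) = \mu_{Th}$ by Example~\ref{exam01}(c), and applying the definition \eqref{tif} of $T_\rho$, $Th(xH) = \int_H h(x\xi)\,\rho(x\xi)^{-1}\,d\lambda_H(\xi)$. Substituting $h$, using $\varphi_\rho(x\xi y^{-1}) = \varphi(x\xi y^{-1}H)\,\rho(x\xi y^{-1})$ and converting the integral against $m_\nu$ on $G$ back to one against $\nu$ on $G/H$ (legitimate because $m_\nu \in M(G:H)$ and the integrand, after the $\lambda_H$-averaging already present, is $H$-right-invariant in $y$), yields exactly \eqref{mh03} with the factor $\rho(x\xi y^{-1})/\rho(x)$.

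For the second identity I would instead use the other description of the left translation: from \eqref{ltrn}, $\mathcal{L}_z\varphi(xH) = \lambda(z^{-1},xH)\,\varphi(z^{-1}xH)$, and note $\nu * \mu_\varphi = \tilde{T}(m_\nu * \lambda_{\varphi_\rho})$ while $m_\nu * \lambda_{\varphi_\rho}$ is absolutely continuous with density involving left translates of $\varphi_\rho$ by elements of the support of $m_\nu$. Passing through $T_\rho$ and using that $T_\rho(L_z f) = \mathcal{L}_{q(z)}(T_\rho f)$ (which underlies the very definition of $\mathcal{L}$), together with the $H$-averaging coming from $m_\nu \in M(G:H)$, converts the $G$-integral against $m_\nu$ into the $\lambda_H \times \nu$ double integral of $\varphi(\xi y^{-1} x H)\,\lambda(\xi y^{-1}, xH)$.

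The routine but slightly delicate point throughout is the bookkeeping of which variable carries the $H$-averaging and the modular/$\rho$ cocycle factors when one moves between integration against $m_\nu$ on $G$ and against $\nu$ on $G/H$; this is exactly where the cocycle identities $\rho(x\xi) = (\Delta_H(\xi)/\Delta_G(\xi))\rho(x)$ and $\lambda(x,yH) = \rho(xy)/\rho(y)$ get used, and one must check the integrand is genuinely well-defined on $G/H$ in the relevant slot. The main obstacle is verifying that the expressions on the right-hand sides of \eqref{mh03} and the second formula really define elements of $L^1(G/H)$ (i.e., finiteness and measurability of the iterated integrals), which follows from Fubini's theorem applied to $\varphi_\rho \in L^1(G)$, $m_\nu \in M(G)$, and the compactness of $H$ (so $\lambda_H$ is finite); once that is in place, the identities are forced by the computation above since $C_0(G/H)$ separates points of $M(G/H)$.
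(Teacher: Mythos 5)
Your proposal is correct, and for the explicit formulas it takes a genuinely different route from the paper. The paper works entirely on $G/H$: it pairs $\mu_\varphi*\nu$ and $\nu*\mu_\varphi$ against a test function $\psi\in C_0(G/H)$ and reads off the densities directly, using for the first formula the adjoint relation $\langle\mathcal{R}_y\psi,\varphi\rangle=\Delta_G(y^{-1})\langle\psi,\mathcal{R}_{y^{-1}}\varphi\rangle$ from the preliminaries, and for the second the strong quasi-invariance of $\mu$ together with the cocycle $\lambda(x,yH)=\rho(xy)/\rho(y)$; the reduction $\nu*\mu_\varphi=\tilde T(m_\nu*\lambda_{\varphi_\rho})$ appears in the paper only to establish absolute continuity, not the formulas. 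You instead push everything up to $M(G)$ via $m_{\mu_\varphi}=\lambda_{\varphi_\rho}$ and Lemma~\ref{lma03}, compute the classical $L^1(G)$-densities of $\lambda_{\varphi_\rho}*m_\nu$ and $m_\nu*\lambda_{\varphi_\rho}$, and push them back down through $T_\rho$ using Example~\ref{exam01}(c); this yields the ideal property and both formulas in one stroke and confines the measure theory to the familiar setting of $M(G)$, at the cost of the bookkeeping you flag when translating $\int_G(\cdot)\,dm_\nu$ into $\int_{G/H}(\cdot)\,d\nu$. Two points you should make explicit to close the argument: (i) the passage from the factor $1/\rho(x\xi)$ produced by \eqref{tif} to the stated factor $\rho(x\xi y^{-1})/\rho(x)$ in \eqref{mh03} uses $\rho(x\xi)=\rho(x)$ for $\xi\in H$, which holds because compactness of $H$ forces $\Delta_H=\Delta_G|_H=1$; and (ii) the identity $\int_G f\,dm_\nu=\int_{G/H}\int_H f(y\eta)\,d\lambda_H(\eta)\,d\nu(yH)$, stated for $f\in C_0(G)$, must be extended to the merely $|m_\nu|$-integrable, right $H$-invariant integrands your Fubini step produces. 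Both are routine, and the computations you sketch do reproduce the two displayed formulas exactly.
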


\begin{proof}
First suppose that $\nu,\omega\in M(G/H)$ and $\omega\ll\mu$. Then by taking $\varphi\in L^1(G/H)$ with $\omega=\mu_\varphi$, we have $m_\nu *\lambda_{\varphi_\rho}\ll \lambda$ and so $m_\nu *\lambda_{\varphi_\rho}= \lambda_g$ for some $g\in L^1(G)$.
\begin{eqnarray*}
\nu*\omega=\nu*  \mu_\varphi= \tilde{T}(m_\nu *\lambda_{\varphi_\rho})= \tilde{T}(\lambda_g) =\mu_{Tg}\ll\mu
\end{eqnarray*}
Also, there exists some $f\in L^1(G)$ such that $\varphi=\mu_{Tf}$. Obviously, $m_\nu *\lambda_f\ll \lambda$ and so $m_\nu *\lambda_f= \lambda_g$ for some $g\in L^1(G)$. Therefore,
\begin{eqnarray*}
\nu*\omega=\nu *  \tilde{T}\lambda_f= \tilde{T}(m_\nu *\lambda_f)= \tilde{T}(\lambda_g) =\mu_{Tg}\ll\mu.
\end{eqnarray*}

Suppose that $\varphi\in L^1(G/H)$ and $\nu\in M(G/H)$. Then for all $\psi\in C_0(G/H)$,
\begin{eqnarray*}
(\mu_\varphi *\nu)(\psi) &=& \int_{G/H} \int_{G/H} \psi_{yH}(xH)\,\varphi(xH)\,d\mu(xH)\,d\nu(yH)\\
&=& \int_{G/H} \langle\mathcal{R}_y\psi,\varphi\rangle\,d\nu(yH)\\
&=& \int_{G/H} \Delta_G(y^{-1})\,\langle\psi,\mathcal{R}_{y{-1}}\varphi\rangle\,d\nu(yH)\\
&=& \int_{G/H} \psi(xH)\,\int_{G/H} \Delta_G(y^{-1})\int_H\,\varphi(x\xi y^{-1}H)\,\frac{\rho(x\xi y^{-1})}{\rho(x)}\,d\lambda_H(\xi)\,d\nu(yH)\,d\mu(xH)\\
&=& \int_{G/H} \psi(xH)\,\gamma(xH)\,d\mu(xH)\\
&=& \mu_\gamma(\psi),
\end{eqnarray*}
where $\gamma(xH)= \int_{G/H} \Delta_G(y^{-1})\int_H\,\varphi(x\xi y^{-1}H)\,d\lambda_H(\xi)\,d\nu(xH)$, for $\mu$-almost all $xH\in G/H$. Also,
\begin{eqnarray*}
(\nu*\mu_\varphi)(\psi) &=& \int_{G/H} \int_{G/H} \int_H \psi(x\xi yH)\,d\lambda_H(\xi)\,d\nu(xH)\,\varphi(yH)\,d\mu(yH)\\
&=&  \int_{G/H} \int_H \int_{G/H}\psi(yH)\,\varphi(\xi^{-1}x^{-1}yH)\,\lambda(\xi^{-1}x^{-1},yH)\,d\mu(yH)\,d\lambda_H(\xi)\,d\nu(xH)\\
&=&  \int_{G/H} \psi(yH)\,\int_{G/H}\int_H\varphi(\xi x^{-1}yH)\,\lambda(\xi x^{-1},yH)\,d\lambda_H(\xi)\,d\nu(xH)\,d\mu(yH)\\
&=& \int_{G/H} \psi(yH)\,\zeta(yH)\,d\mu(yH)\\
&=& \mu_\zeta(\psi),
\end{eqnarray*}
where $\zeta(yH)= \int_{G/H}\int_H\varphi(\xi x^{-1}yH)\,\lambda(\xi x^{-1},yH)\,d\lambda_H(\xi)\,d\nu(xH)$, for $\mu$-almost all $yH\in G/H$.
\end{proof}

By considering $L^1(G/H)$ as a subalgebra of $M(G/H)$, where $\mu$ is the strongly quasi-invariant measure arising from a rho-function $\rho$, we may define a convolution on $L^1(G/H)$ by using the restriction the convolution on $M(G/H)$. It means that, for all $\varphi,\psi\in L^1(G/H)$, $\varphi*\psi$ is the element of $L^1(G/H)$ for which $\mu_\varphi*\mu_\psi=\mu_{\varphi*\psi}$. The following theorem states that how we can obtain the convolution on $L^1(G/H)$ by using the convolution on $L^1(G)$ and it also shows that we can assume $\varphi*\psi$ as a generalized linear combination of the left translations of $\psi$.\\

\begin{thm}
Suppose that $H$ is a compact subgroup of $G$ and $\mu$ is the strongly quasi-invariant Radon
measure on $G/H$ arising from a rho-function $\rho$. For all $\varphi,\psi\in L^1(G/H)$ define $\varphi*\psi$, the
convolution of $\varphi$ and $\psi$, by
\begin{eqnarray}
\varphi*\psi&=&\int_G
\varphi_\rho(y)\,\mathcal{L}_y\psi\,d\lambda_G(y)\\
&=& T(\varphi_\rho*\psi_\rho)
\quad(\mu-{\rm almost\;\,all\;\,}xH\in
\frac{G}{H})
\end{eqnarray}
where the integrals are considered in the sense of vector-valued integrals. Then, $L^1(G/H)$ is a Banach algebra.
\end{thm}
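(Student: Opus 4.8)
The plan is to reduce the statement to the Banach-algebra structure already built on $M(G/H)$, using the isometric embedding $L^1(G/H)\hookrightarrow M(G/H)$, $\varphi\mapsto\mu_\varphi$. I would organise the argument in three steps: (i) check that the two displayed formulas make sense and coincide, (ii) identify $\varphi*\psi$ with the restriction of the convolution of $M(G/H)$, and (iii) deduce the algebra axioms.

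\emph{Step (i): well-definedness.} For $\varphi,\psi\in L^1(G/H)$ we have $\varphi_\rho,\psi_\rho\in L^1(G:H)$. Since $M(G:H)$ is a subalgebra of $M(G)$ and $L^1(G)$ is an ideal of $M(G)$, the product $\lambda_{\varphi_\rho}*\lambda_{\psi_\rho}$ is both an element of $M(G:H)$ and equal to $\lambda_{\varphi_\rho*\psi_\rho}$; hence $\varphi_\rho*\psi_\rho\in L^1(G:H)$ and $T(\varphi_\rho*\psi_\rho)\in L^1(G/H)$ is well-defined. To identify it with $\int_G\varphi_\rho(y)\,\mathcal{L}_y\psi\,d\lambda_G(y)$, I would start from the classical identity $\varphi_\rho*\psi_\rho=\int_G\varphi_\rho(y)\,L_y\psi_\rho\,d\lambda_G(y)$, valid as a Bochner integral in $L^1(G)$, apply the bounded linear map $T$ — which therefore commutes with the integral — and use $\mathcal{L}_y\psi=T(L_y\psi_\rho)$ together with $T\psi_\rho=\psi$. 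The Bochner measurability of $y\mapsto\mathcal{L}_y\psi$ that this requires follows from the strong-operator continuity of $y\mapsto\mathcal{L}_y$ recalled in Section~1.

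\emph{Step (ii): identification with the convolution on $M(G/H)$.} Recall from Section~3 that $\lambda_{\varphi_\rho}=m_{\mu_\varphi}$. Hence
\[
m_{\mu_\varphi}*m_{\mu_\psi}=\lambda_{\varphi_\rho}*\lambda_{\psi_\rho}=\lambda_{\varphi_\rho*\psi_\rho},
\]
and by Lemma~\ref{lma03}\,(b) the left-hand side equals $m_{\mu_\varphi*\mu_\psi}$. Applying $\tilde{T}$ and Example~\ref{exam01}\,(c) gives $\mu_\varphi*\mu_\psi=\tilde{T}(\lambda_{\varphi_\rho*\psi_\rho})=\mu_{T(\varphi_\rho*\psi_\rho)}$. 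Thus $\varphi*\psi$ as defined is exactly the preimage of $\mu_\varphi*\mu_\psi$ under $\varphi\mapsto\mu_\varphi$; that is, the convolution on $L^1(G/H)$ is the restriction to $L^1(G/H)$ of the convolution on $M(G/H)$, in accordance with the preceding theorem that $L^1(G/H)$ is an ideal of $M(G/H)$.

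\emph{Step (iii): the algebra axioms.} Bilinearity is immediate. For associativity, $(\varphi*\psi)*\chi$ and $\varphi*(\psi*\chi)$ have the same image in $M(G/H)$, namely $(\mu_\varphi*\mu_\psi)*\mu_\chi=\mu_\varphi*(\mu_\psi*\mu_\chi)$ by the Banach-algebra property of $M(G/H)$; since $\varphi\mapsto\mu_\varphi$ is injective, the two agree. For submultiplicativity, $\|\varphi*\psi\|_1=\|\mu_{\varphi*\psi}\|=\|\mu_\varphi*\mu_\psi\|\le\|\mu_\varphi\|\,\|\mu_\psi\|=\|\varphi\|_1\|\psi\|_1$ (equivalently $\|T(\varphi_\rho*\psi_\rho)\|_1\le\|\varphi_\rho*\psi_\rho\|_1\le\|\varphi_\rho\|_1\|\psi_\rho\|_1=\|\varphi\|_1\|\psi\|_1$, using $\|T\|\le1$ and $\|\varphi_\rho\|_1=\|\varphi\|_1$). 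As $L^1(G/H)$ is already complete, it is a Banach algebra. The only point beyond bookkeeping is the interchange of $T$ with the vector-valued integral in Step (i); everything else is a matter of tracking which measure corresponds to which function under $T$, $\tilde{T}$ and the assignments $f\mapsto\lambda_f$, $\nu\mapsto m_\nu$, $\varphi\mapsto\mu_\varphi$.
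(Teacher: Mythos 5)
Your proposal is correct, and while it shares the paper's overall strategy -- realize $\varphi*\psi:=T(\varphi_\rho*\psi_\rho)$ as the restriction to $L^1(G/H)$ of the convolution already constructed on $M(G/H)$, and then inherit associativity and submultiplicativity from there -- it establishes the crucial identification by a genuinely different route. The paper proves $T(\varphi_\rho*\psi_\rho)=\varphi*\mu_\psi$ by a long pointwise computation: it expands $T(\varphi_\rho*\psi_\rho)(xH)$ via Weil's formula and a chain of substitutions involving $\rho$, $\Delta_G$ and $\lambda(\cdot,\cdot)$ until it matches the explicit formula \eqref{mh03} from the preceding ideal theorem. You instead chase the identity through the structural lemmas already in place: $\lambda_{\varphi_\rho}=m_{\mu_\varphi}$, the multiplicativity $m_{\nu*\omega}=m_\nu*m_\omega$ of Lemma \ref{lma03}, and $\tilde{T}\lambda_f=\mu_{Tf}$ from Example \ref{exam01}, which give $\mu_\varphi*\mu_\psi=\mu_{T(\varphi_\rho*\psi_\rho)}$ in three lines; the vector-valued-integral formula is then obtained by pushing the standard Bochner representation $\varphi_\rho*\psi_\rho=\int_G\varphi_\rho(y)\,L_y\psi_\rho\,d\lambda_G(y)$ through the bounded operator $T$ and using $\mathcal{L}_y\psi=T(L_y\psi_\rho)$. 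Your version buys robustness and transparency -- it avoids the delicate manipulations of the rho-function and modular function (where the paper's displayed computation in fact contains several slips), and it makes explicit where each previously proved fact is used -- at the cost of not exhibiting the explicit pointwise integral formula for $\varphi*\psi$ that the paper's computation produces along the way. One small point worth making explicit in your Step (i): $T\psi_\rho=\psi$ because $H$ is compact with normalized Haar measure, so $T\psi_\rho(xH)=\int_H\psi(xH)\,d\lambda_H(\xi)=\psi(xH)$; this is what licenses the substitution $\mathcal{L}_y\psi=T(L_y\psi_\rho)$.
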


\begin{proof}
Suppose that $\varphi,\psi\in L^1(G/H)$. By using \eqref{mh03}, for $\mu$-almost all $xH\in G/H$ we have
\begin{eqnarray*}\label{mh05}
T(\varphi_\rho *\psi_\rho)(xH)&=& \int_H \frac{(\varphi_\rho*\psi_\rho)}{\rho(x\xi)}\,d\lambda_H(\xi)\\
&=& \int_H \int_G \frac{\varphi(yH)\,\rho(y)\,\psi(y^{-1}x\xi H)\,\rho(y^{-1}x\xi)}{\rho(x)}\,d\lambda_G(y)\,d\lambda_H(\xi)\\
&=& \int_G \varphi(yH)\,\rho(y)\, \int_H\frac{\psi(y^{-1}x H)\,\rho(y^{-1}x)}{\rho(x)}\,d\lambda_H(\xi)\,d\lambda_G(y)\\
&=& \int_G \varphi(yH)\,\rho(y)\,\psi(y^{-1}x H)\,\lambda(y^{-1},xH)\,d\lambda_G(y)\\
&=& \int_G \varphi(yH)\,\rho(y)\,\mathcal{L}_y\psi(x H)\,d\lambda_G(y)\\
&=& \int_G \varphi(xyH)\,\rho(xy)\,\mathcal{L}_{xy}\psi(x H)\,d\lambda_G(y)\\
&=& \int_G \varphi(xyH)\,\rho(xy)\,\lambda(y^{-1}x^{-1},xH)\,\psi(y^{-1} H)\,d\lambda_G(y)\\
&=& \frac{1}{\rho(x)}\,\int_G \varphi(xyH)\,\rho(xy)\,\rho(y^{-1})\,\psi(y^{-1} H)\,d\lambda_G(y)\\
&=& \frac{1}{\rho(x)}\,\int_{G} \varphi(xy^{-1}H)\,\rho(xy^{-1})\,\rho(y)\,\psi(y H)\,\Delta_G(y^{-1})\,d\lambda_G(y)\\
&=& \frac{1}{\rho(x)}\,\int_{G/H}\Delta_G(y^{-1})\,\int_H \varphi(x\xi^{-1} y^{-1}H)\,\rho(x\xi^{-1} y^{-1})\,\psi(y\xi H)\,d\lambda_G(y)\,d\mu(yH)\\
&=& \frac{1}{\rho(x)}\,\int_{G/H}\Delta_G(y^{-1})\,\int_H \varphi(x\xi^{-1} y^{-1}H)\,\rho(x\xi^{-1} y^{-1})\,\psi(yH)\,d\lambda_G(y)\,d\mu(yH)\\
&=& \frac{1}{\rho(x)}\,\int_{G/H}\Delta_G(y^{-1})\,\int_H \varphi(x\xi^{-1} y^{-1}H)\,\rho(x\xi^{-1} y^{-1})\,d\lambda_G(y)\,d\mu_\psi(yH)\\
&=& (\varphi*\mu_\psi)(xH)\\
&=& (\varphi*\psi)(xH),
\end{eqnarray*}
for $\mu$-almost all $xH\in G/H$. The above equations show that
\begin{eqnarray*}
\varphi*\psi(xH)&=&\int_G
\varphi(yH)\,\rho(y)\,\mathcal{L}_y\psi(xH)\,d\lambda_G(y)\\
&=&\int_G
\varphi_\rho(y)\,\mathcal{L}_y\psi(xH)\,d\lambda_G(y)\\
&=& T(\varphi_\rho*\psi_\rho)
\end{eqnarray*}
for $\mu$-almost all $xH\in G/H$.
\end{proof}

In other words, $L^1(G:H)$ is a left ideal and a closed subalgebra of $L^1(G)$.\\

\begin{thm}
Let $H$ be a compact subgroup of $G$ and $G/H$ have been attached to a strongly quasi-invariant measure $\mu$. Then
\newcounter{mh5}
\begin{list}
{\bf(\alph{mh5})}{\usecounter{mh5}}
\item $L^1(G/H) = \{\nu\in M(G/H):\, xH\mapsto |\nu|*\delta_{xH} \text{ is weakly continuous}\}$

\item  $L^1(G/H) \subseteq\{\nu\in M(G/H):\, xH\mapsto \delta_{xH}*|\nu| \text{ is weakly continuous}\}$
\end{list}
\end{thm}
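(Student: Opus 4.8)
The plan is to carry the statement across the isometric algebra isomorphism $\tilde{T}\colon M(G:H)\to M(G/H)$, $m_\nu\mapsto\tilde{T}(m_\nu)=\nu$ (cf. \eqref{mh01}, \eqref{mh02} and Lemma~\ref{lma03}), and then to work inside $M(G)$, using the classical description of $L^1(G)$ via continuity of translates. Since $\int_G f\,dm_{\delta_{xH}}=\int_H f(x\xi)\,d\lambda_H(\xi)$ one has $m_{\delta_{xH}}=\delta_x*\lambda_H$, so Lemma~\ref{lma03}(b) gives, for $x\in G$,
\[
m_{|\nu|*\delta_{xH}}=m_{|\nu|}*\delta_x*\lambda_H
\qquad\text{and}\qquad
m_{\delta_{xH}*|\nu|}=\delta_x*\lambda_H*m_{|\nu|}.
\]
Because $M(G:H)$ is a norm-closed subspace of $M(G)$, the weak topology it inherits from $M(G)$ is its own weak topology; composing with the continuous open quotient map $q$ then shows that $xH\mapsto|\nu|*\delta_{xH}$ is weakly continuous on $G/H$ if and only if $x\mapsto m_{|\nu|}*\delta_x*\lambda_H$ is weakly continuous on $G$, and similarly for the left-module family.

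For the inclusions ``$L^1(G/H)\subseteq\cdots$'' in (a) and (b), write $\nu=\mu_\varphi$, so $|\nu|=\mu_{|\varphi|}$ with $|\varphi|\in L^1(G/H)$. From the formulas of the preceding theorems (cf. \eqref{mh03} and the adjoint relation $\mathcal{R}_x^{*}=\Delta_G(x)^{-1}\mathcal{R}_{x^{-1}}$) one gets $\mu_{|\varphi|}*\delta_{xH}=\mu_{\Delta_G(x^{-1})\mathcal{R}_{x^{-1}}|\varphi|}$ and $\delta_{xH}*\mu_{|\varphi|}=\mu_{\psi_x}$, where $\psi_x=\int_H\mathcal{L}_{x\xi}|\varphi|\,d\lambda_H(\xi)\in L^1(G/H)$. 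Using that $x\mapsto\mathcal{R}_x$ and $x\mapsto\mathcal{L}_x$ are strong-operator continuous on $B(L^1(G/H))$, that $\Delta_G$ is continuous, that $H$ is compact (so the Bochner integral defining $\psi_x$ varies norm-continuously with $x$), and that $L^1(G/H)\hookrightarrow M(G/H)$ is isometric, one checks that $x\mapsto\mu_{|\varphi|}*\delta_{xH}$ and $x\mapsto\delta_{xH}*\mu_{|\varphi|}$ are norm continuous into $M(G/H)$. Since they factor through $q$ and norm continuity implies weak continuity, both inclusions follow; in particular (b) is complete, as only that inclusion is asserted there.

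It remains to prove the reverse inclusion in (a): if $xH\mapsto|\nu|*\delta_{xH}$ is weakly continuous then $\nu\in L^1(G/H)$. Put $\kappa:=m_{|\nu|}\in M(G:H)$, so $\kappa\geq0$, $\kappa*\lambda_H=\kappa$, and $x\mapsto\kappa*\delta_x*\lambda_H$ is weakly continuous from $G$ into $M(G)$. For $\phi\in C_c(G)$ with $\phi\geq0$ and $\int_G\phi\,d\lambda_G=1$ set $g_\phi:=\kappa*\lambda_\phi*\lambda_H$; since $L^1(G)$ is a closed two-sided ideal of $M(G)$, $g_\phi\in L^1(G)$, and a Fubini computation gives $\langle g_\phi,\psi\rangle=\int_G\phi(x)\,\langle\kappa*\delta_x*\lambda_H,\psi\rangle\,d\lambda_G(x)$ for every $\psi\in C_0(G)$. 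Running $\phi$ through an approximate identity at $e$ and using continuity of $x\mapsto\langle\kappa*\delta_x*\lambda_H,\psi\rangle$, we get $g_\phi\to\kappa*\lambda_H=\kappa$ in $\sigma(M(G),C_0(G))$. To upgrade this: fix a compact neighbourhood $K$ of $e$; the set $\{\kappa*\delta_x*\lambda_H:x\in K\}$ is weakly compact (a continuous image of a compact set), so by the Krein--Smulian theorem its closed convex hull $C$ is weakly compact, and each $g_\phi$ with $\supp\phi\subseteq K$ lies in $C$, being a probability average of that family. As $L^1(G)$ is a norm-closed, hence weakly closed, subspace of $M(G)$, $C\cap L^1(G)$ is weakly compact in $L^1(G)$; hence $(g_\phi)$ has a subnet converging weakly --- a fortiori in $\sigma(M(G),C_0(G))$ --- to some $h\in L^1(G)$, and by Hausdorffness $h=\kappa$. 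Therefore $\kappa\in L^1(G)\cap M(G:H)=L^1(G:H)$; writing $\kappa=\lambda_f$ with $f\in L^1(G:H)$ and applying $\tilde{T}$ (Example~\ref{exam01}(c) and \eqref{mh01}) gives $|\nu|=\tilde{T}(\kappa)=\mu_{Tf}\ll\mu$, so $\nu\in L^1(G/H)$, completing (a).

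I expect the weak-compactness step (the averaging together with Krein--Smulian) to be the main obstacle: it is exactly what forces the $\sigma(M(G),C_0(G))$-limit $\kappa$ to lie in $L^1(G)$ rather than merely in $M(G)$, and it is where the hypothesis of \emph{weak} --- as opposed to merely weak$^{*}$ --- continuity is genuinely used. For instance $xH\mapsto\delta_{xH}*\delta_{eH}=\delta_{xH}$ and $xH\mapsto\delta_{eH}*\delta_{xH}$ are weak$^{*}$-continuous while $\delta_{eH}\notin L^1(G/H)$ when $G/H$ is not discrete; this weak$^{*}$/weak gap is also why (b) is stated only as an inclusion. Everything else is routine bookkeeping with the transfer maps and the module formulas recorded above.
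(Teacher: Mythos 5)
Your proof is correct, but for the hard inclusion in (a) it takes a genuinely different route from the paper. The paper never leaves $M(G/H)$: for the forward inclusions it pairs $|\nu|*\delta_{xH}$ and $\delta_{xH}*|\nu|$ with $\psi\in C_0(G/H)$ and uses $C_0(G/H)\subseteq RUC(G/H,G)\cap LUC(G/H,G)$ to get $\bigl|(|\nu|*\delta_{xH})(\psi)-(|\nu|*\delta_{yH})(\psi)\bigr|\le\|\mathcal{R}_x\psi-\mathcal{R}_y\psi\|_{\sup}\,\|\varphi\|_1$, and for the reverse inclusion it exploits that $L^1(G/H)$ is an ideal of $M(G/H)$: for a $\mu$-null Borel set $E$ and $\varphi=\chi_{q(U)}$ one has $0=(|\nu|*\mu_\varphi)(E)=\int_{q(U)}(|\nu|*\delta_{yH})(\chi_E)\,d\mu(yH)$, and since weak continuity makes $yH\mapsto(|\nu|*\delta_{yH})(\chi_E)$ continuous and non-negative (this is exactly where weak rather than weak$^*$ continuity is needed, since $\chi_E$ gives a functional in $M(G/H)^*$ not coming from $C_0$), evaluating at $eH$ and using the right identity $|\nu|*\delta_{eH}=|\nu|$ yields $|\nu|(E)=0$, i.e.\ $\nu\ll\mu$. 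You instead transfer to $M(G:H)\subseteq M(G)$, smooth $\kappa=m_{|\nu|}$ into $g_\phi=\kappa*\lambda_\phi*\lambda_H\in L^1(G)$ with $g_\phi\to\kappa$ vaguely, and force the limit into $L^1(G)$ via Krein--Smulian weak compactness of the closed convex hull $C$ of the weakly compact orbit $\{\kappa*\delta_x*\lambda_H:\,x\in K\}$; that is a valid argument, and it isolates nicely where weak (as opposed to weak$^*$) continuity enters, but it uses noticeably heavier machinery than the paper's ideal-plus-null-set trick, which reaches the same conclusion in a few lines. One step you should spell out: to place $g_\phi$ in $C$ you need the vector integral to be a genuine barycenter, i.e.\ to commute with every functional in $M(G)^*$, not just with $C_0(G)$ as in your Fubini computation; this does hold because a weakly continuous map on the compact set $K$ with weakly compact convex closed hull admits a barycenter in $C$, which must equal $\kappa*\lambda_\phi*\lambda_H$ since $C_0(G)$ separates points of $M(G)$. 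Your treatment of the two easy inclusions (norm continuity of $x\mapsto\mu_{|\varphi|}*\delta_{xH}$ and $x\mapsto\delta_{xH}*\mu_{|\varphi|}$ via strong operator continuity of $\mathcal{R}_x$, $\mathcal{L}_x$ and compactness of $H$, then factoring through the open map $q$) is fine and in fact gives slightly more than the paper's uniform-continuity estimate.
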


\begin{proof}
First, suppose that $d\nu(xH)=\varphi(xH)\,d\mu(xH)$ for some $\varphi\in L^1(G/H)$. Then for all $x,y\in G$ and $\psi\in C_0(G/H)$,
\begin{eqnarray*}
|(|\nu|*\delta_{xH})(\psi)-|\nu|*\delta_{yH})(\psi)| &=& |\int_{G/H} \psi_{xH}(zH)-\psi_{yH}(zH)\,d|\nu|(zH)|\\
&\leq& \int_{G/H} |\mathcal{R}_x\psi(zH)-\mathcal{R}_y\psi(zH)|\,|\varphi(zH)\,d\mu(zH)|\\
&\leq&  \|\mathcal{R}_x\psi-\mathcal{R}_y\psi\|_{\sup}\,\|\varphi\|_1.
\end{eqnarray*}
Since $\psi\in C_0(G/H)\subseteq RUC(G/H)$, $\|\mathcal{R}_x\psi-\mathcal{R}_y\psi\|_{\sup}\rightarrow 0$ as $x\rightarrow y$. Thus,\break $|(|\nu|*\delta_{xH})(\psi)-|\nu|*\delta_{yH})(\psi)|\rightarrow 0$ as $x\rightarrow y$. Similarly, $\psi$ is left uniformly continuous and
\begin{eqnarray*}
|(\delta_{xH}*|\nu|)(\psi)-\delta_{yH}*|\nu|)(\psi)| &=& |\int_{G/H} \leftidx{_{xH}}\psi(zH)-\leftidx{_{yH}}\psi(zH)\,d|\nu|(zH)|\\
&\leq& \int_{G/H} |\mathcal{L}_x\psi(zH)-\mathcal{L}_y\psi(zH)|\,|\varphi(zH)\,d\mu(zH)|\\
&\leq&  \|\mathcal{L}_x\psi-\mathcal{L}_y\psi\|_{\sup}\,\|\varphi\|_1\\
&\longrightarrow & 0
\end{eqnarray*}
as $x\rightarrow y$.\\
Now, let $\nu\in M(G/H)$ be such that $xH\mapsto |\nu|*\delta_{xH}$ is weakly continuous. Since $L^1(G/H)$ is an ideal of $M(G/H)$, when $\varphi \in L^1(G/H)$, $|\nu|*\mu_\varphi\in L^1(G/H)$. In other words, $|\nu|*\mu_\varphi\ll \mu$. To show that $\nu\ll \mu$, assume that $E\in \mathcal{B}(G/H)$ is null with respect to $\mu$. By taking a neighborhood $U$ of $e$ with compact closure and $\varphi=\chi_{q(U)}$, we can write
\begin{eqnarray*}
0 = (|\nu|*\mu_\varphi)(E) &=& \int_{G/H}\int_{G/H} (\chi_E)_{yH}(xH)\varphi(yH),d|\nu|(xH)\,d\mu(yH)\\
&=& \int_{q(U)}\int_{G/H} (\chi_E)_{yH}(xH)\,d|\nu|(xH)\,d\mu(yH)\\
&=& \int_{q(U)} (|\nu|*\delta_{yH})(\chi_E)\,d\mu(yH).
\end{eqnarray*}
Since, $yH\mapsto (|\nu|*\delta_{yH})(\chi_E)$ is a non-negative and continuous function, we deduce that
\begin{eqnarray*}
(|\nu|*\delta_{yH})(\chi_E)=0\quad(yH\in q(U)).
\end{eqnarray*}
In particular, by taking $yH=eH$ we get $|\nu|(E)=0$ and so $\nu(E)=0$.

\end{proof}

\begin{thm}
$L^1(G/H)$ has a left approximate identity if and only if $H$ is a normal subgroup of $G$.
\end{thm}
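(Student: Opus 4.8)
The plan is to treat the two implications separately; the ``if'' part is routine and the ``only if'' part carries the content. For the ``if'' direction, suppose $H\trianglelefteq G$. Then $G/H$ is a locally compact group, and by the remark in Section~4 recording that the convolution \eqref{007} on $M(G/H)$ is the usual convolution of the measure algebra of $G/H$, the restriction of \eqref{007} to the ideal $L^1(G/H)$ makes it (a Banach algebra isometrically isomorphic to) the group algebra of $G/H$; equivalently, the isometric algebra isomorphism $\varphi\mapsto\varphi_\rho$ carries $L^1(G/H)$ onto $L^1(G:H)$, which for compact normal $H$ is the group algebra of $G/H$. Since the group algebra of any locally compact group possesses a bounded two-sided approximate identity (for instance a net of nonnegative functions of integral one whose supports shrink to the identity coset), $L^1(G/H)$ has in particular a left approximate identity.

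For the ``only if'' direction, let $(e_\alpha)_\alpha$ be a left approximate identity for $L^1(G/H)$ (not assumed bounded), and fix $\psi\in L^1(G/H)$. Since $L^1(G/H)$ is an ideal of $M(G/H)$ and $\delta_{eH}\in M(G/H)$, the measure $\delta_{eH}*\mu_\psi$ lies in $L^1(G/H)$, say $\delta_{eH}*\mu_\psi=\mu_\eta$ with $\eta\in L^1(G/H)$. Then for every $\varphi\in L^1(G/H)$, associativity in the Banach algebra $M(G/H)$ together with the fact that $\delta_{eH}$ is a right identity of $M(G/H)$ gives
\[
\mu_\varphi*\mu_\eta=\mu_\varphi*(\delta_{eH}*\mu_\psi)=(\mu_\varphi*\delta_{eH})*\mu_\psi=\mu_\varphi*\mu_\psi.
\]
Because the convolution of $L^1(G/H)$ is the one determined by $\mu_a*\mu_b=\mu_{a*b}$ and the embedding $a\mapsto\mu_a$ of $L^1(G/H)$ into $M(G/H)$ is injective, this says $\varphi*\eta=\varphi*\psi$ for \emph{all} $\varphi\in L^1(G/H)$. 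Putting $\varphi=e_\alpha$ yields $e_\alpha*\eta=e_\alpha*\psi$ for every $\alpha$, whence, passing to the limit along the left approximate identity, $\eta=\psi$; that is, $\delta_{eH}*\mu_\psi=\mu_\psi$. Since $\psi$ was arbitrary, Proposition~\ref{unital normal} forces $H$ to be normal in $G$.

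I expect the only subtlety to be conceptual rather than computational: $\delta_{eH}$ is merely a \emph{right} identity of the larger algebra $M(G/H)$ and is not itself an element of $L^1(G/H)$, so it cannot be cancelled directly; the device is that the ideal property of $L^1(G/H)$ keeps $\delta_{eH}*\mu_\psi$ inside $L^1(G/H)$, after which a single (possibly unbounded) left approximate identity annihilates the discrepancy $\eta-\psi$. It is worth noting that this argument uses neither boundedness of the approximate identity nor the right approximate identity established earlier, the left/right asymmetry being carried entirely by the one-sidedness of $\delta_{eH}$ and by Proposition~\ref{unital normal}. In the ``if'' direction the only point deserving care is the identification, for normal $H$, of the intrinsically defined convolution on $L^1(G/H)$ with the group-algebra convolution on $G/H$, which is exactly the content of the Section~4 remark applied to the ideal $L^1(G/H)\subseteq M(G/H)$.
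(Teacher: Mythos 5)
Your proof is correct and takes essentially the same route as the paper's: insert the right identity $\delta_{eH}$ of $M(G/H)$, use associativity together with the ideal property of $L^1(G/H)$ to get $\delta_{eH}*\mu_\psi=\mu_\psi$, and then invoke Proposition~\ref{unital normal}. The differences are only organizational: you cancel the (possibly unbounded) left approximate identity at the end instead of carrying it through one chain of equalities, and you spell out the ``if'' direction, which the paper leaves to the remark that for normal $H$ the convolution coincides with the group-algebra convolution on $G/H$.
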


\begin{proof}
Let $\{\eta_\alpha\}_{\alpha\in I}$ be a left approximate identity for $L^1(G/H)$. Then for all $\varphi\in L^1(G/H)$ we have $\eta_\alpha*\varphi\longrightarrow\varphi$ and so $\mu_{\eta_\alpha}*\mu_{\varphi}\longrightarrow\mu_{\varphi}$. Therefore,
\begin{eqnarray*}
\mu_\varphi = \lim_\alpha \mu_{\eta_\alpha}*\mu_{\varphi}= \lim_\alpha (\mu_{\eta_\alpha}*\delta_{eH})*\mu_{\varphi}= \lim_\alpha \mu_{\eta_\alpha}*(\delta_{eH}*\mu_{\varphi})= \delta_{eH}*\mu_{\varphi}
\end{eqnarray*}
where $\varphi\in L^1(G/H)$, and hence $H$ must be normal in $G$.
\end{proof}

If $A$ is an involutive Banach algebra with a right approximate identity $\{e_\alpha\}_{\alpha\in I}$, then it has a left approximate identity. Since, for all $a\in A$ we have $\lim e_\alpha^**a = (\lim a^**e_\alpha)^*=a^{**}=a$.

\begin{cor}
There is an involution on Banach algebra $L^1(G/H)$ only if $H$ is a normal subgroup of $G$.
\end{cor}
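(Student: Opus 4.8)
The plan is to obtain this as a purely formal consequence of the theorem immediately preceding it (which says $L^1(G/H)$ has a left approximate identity exactly when $H$ is normal) together with the elementary remark recorded just above the statement; the only substantive input is that $L^1(G/H)$ always carries a \emph{right} approximate identity.

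So assume $L^1(G/H)$ is equipped with an involution $a\mapsto a^{*}$. First I would recall the right approximate identity $\{e_{\alpha}\}_{\alpha\in I}$ of $L^1(G/H)$. Through the isometric algebra isomorphism $T_{\rho}\colon L^1(G:H)\to L^1(G/H)$ it suffices to produce one in $L^1(G:H)$, and for this one can take $Pu_{\alpha}$, where $\{u_{\alpha}\}$ is an approximate identity of $L^1(G)$ and $P$ is the norm-one projection $Pf(x)=\int_{H}f(x\xi)\,d\lambda_{H}(\xi)$ of $L^1(G)$ onto $L^1(G:H)$ (well defined since $\Delta_G$ is trivial on the compact group $H$); because $P(g*u_{\alpha})=g*Pu_{\alpha}$ and $g*u_{\alpha}\to g$ for every $g\in L^1(G:H)$, one gets $g*Pu_{\alpha}\to g$. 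If this is established elsewhere in the paper it should simply be cited here. Next I would apply the remark preceding the corollary: since the algebra is involutive, $\{e_{\alpha}^{*}\}_{\alpha\in I}$ is a left approximate identity, as $e_{\alpha}^{*}*a=(a^{*}*e_{\alpha})^{*}\longrightarrow (a^{*})^{*}=a$ for each $a$.

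Finally, the preceding theorem then forces $H$ to be a normal subgroup of $G$, which is precisely the contrapositive of the claim. There is no genuine obstacle in the argument once the right approximate identity is in hand; the one point deserving a word of care is that $\varphi*e_{\alpha}\to\varphi$ must hold for \emph{every} $\varphi\in L^1(G/H)$ rather than merely on a dense subset, which is guaranteed by the averaging identity $P(g*u_\alpha)=g*Pu_\alpha$ above (boundedness of $\{e_\alpha\}$ would in any case suffice).
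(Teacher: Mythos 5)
Your argument is correct and follows the paper's own route exactly: the standing right approximate identity of $L^1(G/H)$ is turned into a left approximate identity via the involution (the remark preceding the corollary), and the preceding theorem then forces $H$ to be normal. The only difference is that you explicitly construct the right approximate identity as $\{Pu_\alpha\}$ with $P$ the averaging projection onto $L^1(G:H)$ --- a detail the paper asserts (in the abstract) but never writes out --- and that construction is sound since $P$ is a norm-one projection commuting with left convolution.
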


\begin{cor}
If $H$ is a compact subgroup of $G$ and $G/H$ has been attached to a strongly quasi-invariant measure, then $L^1(G/H)$ is amenable if and only if $H$ is a normal subgroup of $G$ and $G$, or equivalently $G/H$, is amenable.
\end{cor}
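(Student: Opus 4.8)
The plan is to deduce this corollary from material already established, combined with two classical facts about amenability: (i) every amenable Banach algebra has a bounded two-sided approximate identity (Johnson), and (ii) for a locally compact group $K$, the convolution algebra $L^1(K)$ is amenable if and only if $K$ is an amenable group (Johnson's theorem). Paired with the theorem immediately preceding this corollary — that $L^1(G/H)$ admits a left approximate identity exactly when $H\trianglelefteq G$ — these make both implications short.

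First I would treat the forward direction. Suppose $L^1(G/H)$ is amenable. By fact (i) it possesses, in particular, a left approximate identity, so the preceding theorem forces $H$ to be a normal subgroup of $G$. Then $G/H$ is a locally compact group, and by the remark following the Banach-algebra theorem of Section 4 the convolution defined by \eqref{007} on $M(G/H)$ — hence its restriction to the closed subalgebra $L^1(G/H)$ — is the ordinary one, so that $L^1(G/H)$ is isometrically isomorphic to the group algebra of $G/H$ with a Haar measure. Amenability of this algebra yields, via fact (ii), that $G/H$ is an amenable group. Since $H$ is compact it is amenable, and amenability of locally compact groups is stable under extensions $1\to H\to G\to G/H\to 1$; therefore $G$ is amenable.

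For the converse, assume $H\trianglelefteq G$ and that $G$ (equivalently, since quotients of amenable groups are amenable, $G/H$) is amenable. Exactly as above, $L^1(G/H)$ is the group algebra of the locally compact group $G/H$, so fact (ii) gives that $L^1(G/H)$ is amenable. The parenthetical equivalence ``$G$ amenable $\iff$ $G/H$ amenable'' in the statement is precisely ``quotients of amenable groups are amenable'' together with ``an extension of an amenable group (here $H$, which is compact, hence amenable) by an amenable group is amenable''.

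The argument is essentially an assembly of citations, so I do not expect a serious obstacle; the one point that must be handled carefully is verifying that, once $H$ is known to be normal, one is genuinely working with the classical group algebra of $G/H$ — with its usual Haar-measure convolution — so that Johnson's group-algebra theorem applies verbatim. This is exactly what the Section 4 remark on the coincidence of the two convolutions, combined with the identification of $L^1(G/H)$ as a subalgebra of $M(G/H)$, supplies, so even this point reduces to an appeal to earlier results.
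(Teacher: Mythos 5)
Your proof is correct and follows exactly the route the paper intends (the corollary is stated without proof, but it is clearly meant to be the assembly you give: amenability forces a bounded, hence left, approximate identity, the preceding theorem then forces $H\trianglelefteq G$, and Johnson's theorem together with the remark that the convolution \eqref{007} reduces to the classical one for normal $H$ finishes both directions). The one point you rightly flag — that for normal compact $H$ the algebra $L^1(G/H,\mu)$ is genuinely the classical group algebra of $G/H$ — is covered by the paper's Section 4 remark plus the equivalence of $\mu$ with the (existing, since $\Delta_G|_H=\Delta_H=1$) invariant measure on $G/H$, so no gap remains.
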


\begin{cor}
Let $H$ be a compact subgroup of $G$ and $G/H$ have been attached to a strongly quasi-invariant measure $\mu$. Then
$L^1(G/H) =\{\nu\in M(G/H):\, xH\mapsto \delta_{xH}*|\nu| \text{ is weakly continuous}\}$
just when $H$ is a normal subgroup of $G$.
\end{cor}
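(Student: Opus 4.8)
The plan is to compare the right hand set $\mathcal{A}:=\{\nu\in M(G/H):\ xH\mapsto\delta_{xH}*|\nu|\text{ is weakly continuous}\}$ with $L^1(G/H)$. By the preceding theorem $L^1(G/H)\subseteq\mathcal{A}$ always, so only the reverse inclusion is in question, and the first step is to describe $\mathcal{A}$ intrinsically. Recall that $\delta_{eH}$ is a right identity of $M(G/H)$, and that by \eqref{007} the measure $\delta_{eH}*|\nu|$ is the average $\int_H\xi_*|\nu|\,d\lambda_H(\xi)$, where $\xi_*|\nu|$ denotes the image of $|\nu|$ under $yH\mapsto\xi yH$. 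Fix a relatively compact neighbourhood $U$ of $e$ and put $\varphi=\chi_{q(U)}\in L^1(G/H)$; since $L^1(G/H)$ is an ideal of $M(G/H)$ we have $\mu_\varphi*|\nu|\in L^1(G/H)$, and combining \eqref{007} with the formula for convolution with a Dirac measure gives, for every Borel set $E\subseteq G/H$,
\[
(\mu_\varphi*|\nu|)(E)=\int_{q(U)}(\delta_{xH}*|\nu|)(E)\,d\mu(xH).
\]
If $\mu(E)=0$ the left hand side vanishes; if moreover $\nu\in\mathcal{A}$, then since $m\mapsto m(E)$ is a bounded linear functional on $M(G/H)$ the integrand $xH\mapsto(\delta_{xH}*|\nu|)(E)$ is continuous and non-negative on the nonempty open set $q(U)$, hence identically zero there, and evaluating at $eH\in q(U)$ gives $(\delta_{eH}*|\nu|)(E)=0$. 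Thus $\nu\in\mathcal{A}$ implies $\delta_{eH}*|\nu|\ll\mu$. Conversely, under the isometric isomorphism $\nu\mapsto m_\nu$ of $M(G/H)$ onto $M(G:H)$ (Lemma~\ref{lma03}) the measure $\delta_{zH}*|\nu|$ corresponds to $\delta_z*(\lambda_H*m_{|\nu|})=\delta_z*m_{\delta_{eH}*|\nu|}$; hence if $\delta_{eH}*|\nu|\ll\mu$ then $zH\mapsto\delta_{zH}*|\nu|$ is a left translate of an absolutely continuous measure on $G$, so it is norm continuous, in particular weakly continuous, and $\nu\in\mathcal{A}$. Therefore
\[
\mathcal{A}=\{\nu\in M(G/H):\ \delta_{eH}*|\nu|\ll\mu\}.
\]

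Given this description, the implication ``$H\trianglelefteq G$ implies equality'' is immediate: when $H$ is normal, $\xi yH=y(y^{-1}\xi y)H=yH$ for all $\xi\in H$ and $y\in G$, so $H$ acts trivially on $G/H$ and $\delta_{eH}*|\nu|=\int_H\xi_*|\nu|\,d\lambda_H(\xi)=|\nu|$ for every $\nu$; hence $\mathcal{A}=\{\nu:\ |\nu|\ll\mu\}=L^1(G/H)$.

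For the converse one has to show that if $H$ is not normal then $\mathcal{A}$ strictly contains $L^1(G/H)$, i.e. to exhibit a $\mu$-singular positive $\nu\in M(G/H)$ whose average $\int_H\xi_*\nu\,d\lambda_H(\xi)$ is $\mu$-absolutely continuous. Since $H$ is not normal there are $x_0\in G$ and $\xi_0\in H$ with $\xi_0x_0H\neq x_0H$, so the orbit map $H\to G/H$, $\xi\mapsto\xi x_0H$, is non-constant; its image is the $H$-homogeneous space $\mathcal{O}\cong H/(H\cap x_0Hx_0^{-1})$, on which $H$ acts non-trivially. I would then build $\nu$ by concentrating it on a set meeting the $H$-orbits transversally near $x_0H$, so that $\nu$ is $\mu$-singular while its $\lambda_H$-average spreads onto a set of positive $\mu$-measure carrying a locally integrable density; the guiding model is $G=SO(3)$, $H=SO(2)$, $G/H=S^2$, where arc length on a meridian great circle is singular for the rotation invariant measure $\mu$ but its $SO(2)$-average has density proportional to $1/\sin\theta$, which lies in $L^1(S^2)$. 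The main obstacle is to carry out this construction, together with the integrability estimate for the averaged density, for an arbitrary non-normal compact $H$ using only the orbit structure of $H$ on $G/H$; here one must keep in mind that the averaging has to genuinely improve absolute continuity, which directs attention to the case where the $H$-orbits on $G/H$ are non-discrete, since for finite $H$ the average $\int_H\xi_*\nu\,d\lambda_H(\xi)$ is a finite convex combination of homeomorphic images of $\nu$ and is therefore absolutely continuous precisely when $\nu$ is.
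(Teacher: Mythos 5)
Your opening reduction is carried out correctly and at the same level of rigour as the paper's preceding theorem (whose part (b) indeed gives the inclusion $L^1(G/H)\subseteq\mathcal{A}$): using that $L^1(G/H)$ is a two-sided ideal of $M(G/H)$, the convolution formula \eqref{007} applied to $\chi_E$, and the identity $m_{\delta_{xH}*|\nu|}=\delta_x*m_{\delta_{eH}*|\nu|}$ inside $M(G:H)$, you obtain the intrinsic description $\mathcal{A}=\{\nu\in M(G/H):\ \delta_{eH}*|\nu|\ll\mu\}$, and from it the direction ``$H$ normal $\Rightarrow$ equality'' is immediate since then $\delta_{eH}*|\nu|=|\nu|$. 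The paper states this corollary without proof, so this reduction is your own contribution, and it is a useful one.

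The gap is the converse, which is the whole content of the statement: you must exhibit, for \emph{every} compact non-normal $H$, a measure $\nu$ with $|\nu|\not\ll\mu$ whose average $\delta_{eH}*|\nu|$ is $\mu$-absolutely continuous, and here you only outline a programme modelled on $G=SO(3)$, $H=SO(2)$, without the construction or the integrability estimate. Worse, your own closing remark shows the programme cannot be completed in the stated generality: if $H$ is finite, then $\delta_{eH}*|\nu|=\frac{1}{|H|}\sum_{\xi\in H}\xi_*|\nu|$ is a finite sum of positive measures, and since $\mu$ is strongly quasi-invariant each $\xi_*|\nu|\ll\mu$ holds exactly when $|\nu|\ll\mu$; hence $\mathcal{A}=L^1(G/H)$ for every finite $H$, normal or not. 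Concretely, for $G=S_3$ and $H$ generated by a transposition, $G/H$ is a three-point discrete space, every measure on it is absolutely continuous with respect to $\mu$ and every map into $M(G/H)$ is continuous, so the displayed equality holds although $H$ is not normal. Thus the ``only if'' half of the corollary fails as stated, and no completion of your sketch (nor any other argument) can prove it without additional hypotheses ensuring the $H$-orbits in $G/H$ are infinite, which is exactly the setting of your $S^2$ model. As submitted, the proposal proves only the description of $\mathcal{A}$ and the ``if'' direction.
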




\medskip
{\bf Hossein Javanshiri}\\
Department of Mathematics, Yazd University, Yazd, Iran.\\
h.javanshiri@yazd.ac.ir\\
\medskip
{\bf Narguess Tavallaei}\\
Department of Mathematics, School of Mathematics and Computer Science,\\
Damghan University, Damghan, Iran.\\
tavallaie@du.ac.ir


\begin{thebibliography}{99}


\bibitem{rvsn}
W.~Arveson, Notes on Measure and Integration in Locally Compact Spaces, unpublished lecture notes, available from www.math.berkeley.edu/~arveson.
8 (1999) 227--235.

\bibitem{farmonic}
G.B. Folland (1995) {\it A Course in Abstract Harmonic Analysis},
CRC Press, Boca Raton.

\bibitem{tro}
N.~Tavallaei, M. Ramezanpour, and b. Olfatian Gilan, {\it Structural transition between $L^p(G)$ and $L^p(G/H)$}, Banach J. Math. Anal. 9 (2015), no. 3, 194--205.

\bibitem{reiter}
H.~Reiter and J.D.~Stegeman (2000) {\it Classical Harmonic Analysis},
2nd Ed., Oxford University Press, New York.

\end{thebibliography}
\end{document}